\newtheorem{theorem}{Theorem}[section]
\newtheorem{proposition}[theorem]{Proposition}
\theoremstyle{definition}
\newtheorem{definition}[theorem]{Definition}
\theoremstyle{remark}
\newtheorem*{remark}{Remark}
\numberwithin{equation}{section}
\newcommand{\BibTeX}{B\kern-0.1emi\kern-0.017emb\kern-0.15em\TeX}
\newcommand{\XYpic}{$\mathrm{X\kern-0.3em\raisebox{-0.18em}{Y}}$-$\mathrm{pic}\,$}
\newcommand{\cl}{C \kern -0.1em \ell}  %%Clifford algebra
\newcommand{\So}{\mathbb{S}}
\newcommand{\bx}{\boldsymbol{x}}
\newcommand{\boy}{\boldsymbol{y}}
\newcommand{\ubx}{\underline{\bx}}
\newcommand{\ueta}{\underline{\eta}}
\newcommand{\uomega}{\underline{\omega}}
\newcommand{\ugamma}{\underline{\gamma}}
\newcommand{\be}{\begin{eqnarray*}}
	\newcommand{\ee}{\end{eqnarray*}}
\newcommand{\ed}{\end{document}}
\begin{document}

%-------------------------------------------------------------------------
% editorial commands: to be inserted by the editorial office
%
%\firstpage{1} \volume{228} \Copyrightyear{2004} \DOI{003-0001}
%
%
%\seriesextra{Just an add-on}
%\seriesextraline{This is the Concrete Title of this Book\br H.E. R and S.T.C. W, Eds.}
%
% for journals:
%
%\firstpage{1}
%\issuenumber{1}
%\Volumeandyear{1 (2004)}
%\Copyrightyear{2004}
%\DOI{003-xxxx-y}
%\Signet
%\commby{inhouse}
%\submitted{March 14, 2003}
%\received{March 16, 2000}
%\revised{June 1, 2000}
%\accepted{July 22, 2000}
%
%
%
%---------------------------------------------------------------------------
%Insert here the title, affiliations and abstract:
%

\title[Teodorescu transform for generalized partial slice monogenic functions]
{Integral formulas and Teodorescu transform for generalized partial-slice monogenic functions}
%----------Author 1
\author[Manjie Hu]{Manjie Hu}
\address{%
	School of Mathematical Sciences,\\ Anhui University, Hefei, Anhui, China}
\email{A23201033@stu.ahu.edu.cn}

\author[Chao Ding]{Chao Ding}
%\author[Birkh\"auser \textit{et al.}]%
%{Birkh\"{a}user Publishing Ltd.}
%
\address{%
	Center for Pure Mathematics, \\School of Mathematical Sciences,\\ Anhui University, Hefei, Anhui, China}
\email{cding@ahu.edu.cn}

\author[Yifei Shen]{Yifei Shen}
%\author[Birkh\"auser \textit{et al.}]%
%{Birkh\"{a}user Publishing Ltd.}
%
\address{%
	Stony Brook Institute at Anhui University, Hefei, Anhui, China}
\email{R22314014@stu.ahu.edu.cn}  

\author[Jiani Wang]{Jiani Wang}
%\author[Birkh\"auser \textit{et al.}]%
%{Birkh\"{a}user Publishing Ltd.}
%
\address{%
	Stony Brook Institute at Anhui University, Hefei, Anhui, China}
\email{R22314085@stu.ahu.edu.cn}
%
%\thanks{This file has been typeset with the option \texttt{draft} to illustrate that feature and its purpose.}
%----------Author 2
%\author[R.~Ab\l amowicz]{Rafa\l \ Ab\l amowicz}
%\author[]{Rafa\l \ Ab\l amowicz}
%\address{%
	%E-i-C of AACA\\
	%Sarasota, FL 34238}
%\email{rablamowicz.aaca@birkhauser-science.com}
%----------classification, keywords, date
\subjclass{30G35 , 32A30 , 44A05}
\keywords{Generalized partial-slice monogenic functions, Teodorescu transform, Cauchy-Pompeiu integral formula, Plemelj-Sokhotski formu-la}
\date{\today}
%----------additions
%\dedicatory{Last Revised:\\ \today}
%%% ----------------------------------------------------------------------
\begin{abstract}
The theory of generalized partial-slice monogenic functions is considered as a syhthesis of the classical Clifford analysis and the theory of  slice monogenic functions. In this paper, we investigate the Cauchy integral formula and the Plemelj formula for generalized partial-slice monogenic functions. Further, we study some properties of the Teodorescu transform in this context. A norm estimation for the Teodorescu transform is discussed as well.
	%本文对广义切片单基因函数进行了研究。介绍了广义切片单基因函数的算子符号和关键定理。推导了广义偏片单基因函数的积分公式，并将新定义的Teodorescu算子推广到各种定理和推论中。此外，我们获得了最重要的霍奇分解，这为我们后续的研究工作奠定了重要的基础。
\end{abstract}
\label{page:firstblob}
%%% ----------------------------------------------------------------------
\maketitle
%%% ----------------------------------------------------------------------
%\tableofcontents
\section{Introduction}\hspace*{\fill} 
Classical Clifford analysis is a mathematical field that extends the concepts of complex analysis to higher-dimensional spaces through the use of Clifford algebras. It centers around the study of monogenic functions (null solutions to the Dirac operator), which has been fully developed in the last decades. More details can be found, for instance, in \cite{Br,Co3,De,Gu,Gi}.
%\par
%Within the framework of Clifford analysis, polynomials formulated with vectors in higher dimensions are not monogenic anymore due to the non-commutativity of multiplication. Quaternions, as a natural extension of complex numbers, since their introduction by Hamilton, have exhibited unique charm in theoretical physics, engineering applications, and mathematical theory research. Quaternion analysis, as an analogue of complex analysis developed over the quaternion field, provides a potent mathematical tool for handling rotations, vector fields, and other problems in three-dimensional space.The literature concerning quaternions is extremely rich, and the standard reference works encompass \cite{Al,Co4,Cu,Pe}. Nevertheless, quaternion analysis is inherently limited by its four-dimensional nature, hindering its direct application to spaces of higher dimensions.
%在克利福德分析的框架内，由于乘法的不可交换性，用高维向量表示的多项式不再是单源的。四元数作为复数的自然推广，自哈密顿提出以来，便在理论物理、工程应用及数学理论研究中展现出独特的魅力。四元数分析，作为在四元数域上发展的复分析类似物，为处理三维空间中的旋转、向量场等问题提供了有力的数学工具。然而，四元数分析本身受限于其四维特性，难以直接应用于更高维度的空间。
\par
A significant difference between classical Clifford analysis and complex analysis is that polynomials formulated with vectors in higher dimensions are no longer monogenic anymore due to the non-commutativity of multiplication of Clifford numbers. A consequence of this is that theory of series expansion is not as simple as in complex analysis. In 2010, Colombo, Sabadini, and Struppa \cite{Co1} extended this concept to the broader context of Clifford algebras, introducing the notion of slice monogenic functions. These functions were initially defined as those that are holomorphic on each slice within Euclidean space, serving as a localized definition for slice regular functions. More importantly, polynomials given in terms of vectors are monogenic in this context, and consequently, numerous findings pertaining to slice monogenic functions are formulated in terms of slices, including nicely formulated Taylor and Laurent series expansions. For instance, in \cite{Do}, the authors establish an identity principle for slice regular functions in slice topology domains, generalize slice function definitions to arbitrary $\mathbb{H}$ subsets. In \cite{Bi}, the author constructed three second-order differential operators with slice regular functions in their kernels, confirming that slice regular functions over $\mathbb{H}$ are harmonic in a sense. In \cite{Co6}, the authors present the construction of a regular and non-commutative Cauchy kernel for slice regular quaternionic functions, along with its formula yielding a novel Cauchy formula. \cite{Co} introduces a new monogenicity concept for Clifford algebra-valued functions from $\mathbb{R} ^{n+1}$ to $\mathbb{R} _n$, inspired by \cite{Ge} and \cite{Ge1}, and derives a corresponding Cauchy integral formula. 
%在 2010 年，Colombo、Sabadini 和 Struppa [ ]将这一概念扩展到更广泛的克利福德代数背景中，引入了切片单源函数的概念。这些函数最初被定义为在欧几里得空间内的每个切片上都是全纯的，作为切片正则函数的局部定义。因此，许多关于切片单源函数的发现都是以切片的形式表述的。
%例如，[] 建立了切片正则函数在切片拓扑域中的恒等原理，将切片正则函数的定义推广到任意H子集。在[]中，作者构造了三个二阶微分算子，其核中有切片正则函数，证实了H上的切片正则函数在某种意义上是“调和的”.[] 本文给出了切片正则四元数函数的正则非交换柯西核的构造，并给出了一个新的柯西核公式.[]受[]和[]的启发，对从Rn+1到Rn的Clifford代数值函数引入了新的单演性概念，并推导出相应的柯西积分公式。一些学者在[]中证明了四元数多项式的Almansi定理，并将其扩展到四元数片正则函数。
\par
In 2013, Colombo et al. \cite{Co5} presented a global differential operator $G$ with non-constant coefficients, whose zero solutions exhibit a strong connection to slice monogenic functions under suitable domain conditions. Consequently, a theoretical framework centered around the differential operator $G$ has emerged, analogous to the important role of the Cauchy-Riemann operator in single-variable complex analysis. Subsequently, this global differential operator for slice regularity has gained significant research attention. For example, \cite{Gh1} establish a framework of global differential equations for slice regular functions over real alternative algebras, refining quaternionic and slice monogenic results and generalizing them.  They introduce differential operators $\vartheta$ and $\bar{\vartheta}$ extending $\partial /\partial x$ and $\partial /\partial x^c$ to $\Omega _D\backslash \mathbb{R}$, elucidating their expressions in quaternions, octonions, and Clifford algebras, and the connection to slice regularity.
\par
The theory of slice monogenic functions also has deep motivations in quantum mechanics. Back to 1930s, Birkhoff and von Neumann showed that quantum mechanics can be formulated over the real, the complex, and the quaternionic numbers. However, the spectral theory of quaternion linear operators has not been effectively developed for a long time. This is mainly because the understanding of the spectral concept of quaternion linear operators is not deep enough, resulting in the inability to effectively construct quaternion spectral theory. In 2006 , using techniques based solely on slice hyperholomorphic functions, the precise notion of spectrum of a quaternionic linear operator, also known as S-spectrum, was identified by F. Colombo and I. Sabadini. The proposal of this concept laid a solid foundation for the development of quaternion spectrum theory. Since then the literature in quaternionic spectral theory has rapidly grown, and much work has contributed to this topic, see e.g. \cite{A,B,C,D,E,F,G}.
%2013 年，Colombo 等人[]提出了一个具有非常数系数的微分算子 G，在适当的域条件下，其零解与切片单源函数有着紧密的联系。这一进展带来了显著的好处：一个精确的微分算子定义了切片正则函数，超越了以前的描述性定义。因此，围绕微分算子 G 出现了一个理论框架，类似于柯西 - 黎曼算子在单变量复分析中的关键作用。随后，这个用于切片正则性的全局微分算子引起了大量的研究关注。
%例如，[]，
\par
In \cite{Xu,Xu1}, the authors introduce a novel class of functions and operators, notably incorporating classical monogenic functions and slice monogenic functions as two particular instances. The main goal of this article is to develop integral formulas and Teodorescu transform in theory of generalized partial-slice monogenic functions. This provides important tools to study some differential equations (such as Beltrami equations) and Dirichlet problems related to partial-slice monogenic functions.
%在这篇文章中，我们引入了一类新的函数以及算子，其中包括经典单值函数和切片单值函数作为两个特例。与切片正则函数和/或单值函数相比，这一新理论还具有许多优良的特性，如恒等定理、表示公式、带有广义部分切片单值柯西核的柯西积分公式、最大模原理、泰勒级数和洛朗级数展开公式。
\par
This paper is organized as follows. In Section 2, we comprehensively review the fundamental principles of Clifford algebras and delve into the basic definitions of monogenic and slice monogenic functions. Section 3 is devoted to an introduction to the operator notations and pivotal theorems pertaining to Generalized slice monogenic functions, which are central to our investigation. Extensions for the newly defined Teodorescu operator to various important theorems and corollaries are given in Section 4. 
%%%%%%%%%%%%%%%         Preliminaries       %%%%%%%%%%%%%%%
\section{Preliminaries}
In this section, we review some definitions and notations in Clifford algebras, as well as the theories of monogenic functions and slice monogenic functions. More details can be found in \cite{Br,Co2,De,Gent}.
%%%%%%%%%%%%%%%%%%%%     2.1     %%%%%%%%%%%%%%%%%%%%
\subsection{Clifford algebras}
Let $\left\{ e_1,e_2,\cdots ,e_n \right\}$ be the standard orthonormal basis for the $n$-dimensional real Euclidean space $\mathbb{R} ^n$. The real Clifford algebra, denoted by $\mathbb{R} _n$, is constructed from these basis elements with the relations
$
	e_ie_j+e_je_i=-2\delta _{ij},\ 1\leqslant i,j\leqslant n,
$
where $\delta _{ij}$ is the Kronecker symbol. Consequently, any element $a\in\mathbb{R} _n$ can be expressed in the form
$
	a=\sum_A{a_Ae_A},\  a_A\in \mathbb{R} ,
$
where $e_A=e_{j_1}e_{j_2}\cdots e_{j_r},$ $A=\left\{ j_1,j_2,\cdots ,j_r \right\} \subseteq \left\{ 1,2,\cdots ,n \right\}$ and $1\leqslant j_1<j_2<\cdots <j_r\leqslant n$, $e_{\emptyset}=e_0=1$.
\par
For each integer $k=0,1,\cdots ,n$, the real linear subspace of $\mathbb{R} _n$, denoted by $\mathbb{R} _{n}^{k}$ and referred to as $k$-vectors, is generated by the binomial coefficient $\big( \begin{array}{c}n\\k\\\end{array}\big)$ elements of the form 
$
	e_A=e_{i_1}e_{i_2}\cdots e_{i_k},1\leqslant i_1<i_2<\cdots <i_k\leqslant n.
$
In particular, the element $\left[ a \right] _0=a_{\emptyset}$ is called the scalar part of $a$. Further, the so-called paravectors, which is a crucial subset of Clifford numbers $\mathbb{R} _n$, are elements in $\mathbb{R} _{n}^{0}\oplus \mathbb{R} _{n}^{1}$. This subset is subsequently associated with $\mathbb{R} ^{n+1}$ through  the following mapping
\begin{align*}
	\mathbb{R} ^{n+1}&\longrightarrow\mathbb{R} _{n}^{0}\oplus \mathbb{R} _{n}^{1} ,\\
	\left( x_0,x_1,\cdots ,x_n \right) &\longmapsto x=x_0+\underline{x}=\sum_{i=0}^n{e_ix_i}.
\end{align*}
Now, we introduce some involutions in $\mathbb{R} _n$ as follows. 
\begin{itemize}
\item \textbf{Clifford conjugation:} The Clifford conjugation of $a=\sum_A{a_A{e_A}}\in\mathbb{R}_n$ is defined as
\begin{align*}
	\overline{a}=\sum_A{a_A\overline{e_A}},
\end{align*}
where $\overline{e_{j_1}\cdots e_{j_r}}=\overline{e_{j_1}}\cdots \overline{e_{j_r}}$, $\overline{e_j}=-e_j$, $1\leqslant j\leqslant n$, $\overline{e_0}=e_0=1$.
\item \textbf{Clifford reversion:} The Clifford reversion of $a=\sum_A{a_A{e_A}}\in\mathbb{R}_n$ is defined as 
\begin{align*}
	\tilde{a}=\sum_A{a_A\widetilde{e_A}},
\end{align*}
where $\widetilde{e_{j_1}\cdots e_{j_r}}=e_{j_r}\cdots e_{j_1}$ and $\widetilde{ab}=\tilde{b}\tilde{a}$ for $a,b\in \mathbb{R} _n$.
\end{itemize}~\par
The norm of $a\in\mathbb{R}_n$ is formally defined as $\left| a \right|=[a\overline{a}]_0=\left( \sum_A{\left| a_A \right|^2} \right) ^{\frac{1}{2}}$. In particular, the norm of a nonzero paravector $x$ is given by $\left| x \right|=\left( x\bar{x} \right) ^{\frac{1}{2}}$, and so its inverse is given by $x^{-1}=\frac{\bar{x}}{\left| x \right|^{2}}$. 
%%%%%%%%%%%%%%%%%%%%     2.2     %%%%%%%%%%%%%%%%%%%%
\subsection{Monogenic and slice monogenic functions}
Now, we review some definitions and preliminaries in the theory of monogenic functions as well as those in the theory of slice monogenic functions. One can find more details in \cite{Br,Co2,De,Gent}.
\begin{definition}[Monogenic functions]
	Let $\Omega\subset\mathbb{R}^{n+1}$ be a domain and $f\,:\, \Omega \longrightarrow \mathbb{R}$ be a real-valued differentiable function. A function $f = \sum_A{e_A f_A}$ is \emph{left monogenic} in $\Omega$ if it satisfies the generalized Cauchy-Riemann equation
	$
		Df\left( x \right) =\sum_{i=0}^n{e_i\frac{\partial}{\partial x_i}}f(\bx)=0,\ x\in \Omega,
$
where $D$ is called the generalized Cauchy-Riemann operator. Since multiplications of Clifford numbers are not commutative in general, there is a similar definition for right monogenic.
\end{definition}
 \begin{remark}
Comparing with the generalized Cauchy-Riemann operator, the operator $\partial _{\underline{x}}=\sum_{i=1}^ne_i\frac{\partial}{\partial x_i}$  is known as the classical Dirac operator in $\mathbb{R} ^n$. In Clifford analysis, both the null-solutions of the generalized Cauchy-Riemann operator and those of the Dirac operator are known as monogenic functions.
\end{remark}
One can observe that every non-real paravector in the space $\mathbb{R} ^{n+1}$ can be expressed in the form of $$x=x_0+\sum_{i=1}^nx_ie_i=:x_0+\underline{x}=x_0+r\omega,$$ where $r=\left| \underline{x} \right|=\left( \sum_{i=1}^n{x_{i}^{2}} \right) ^{\frac{1}{2}}$ and $\omega=\dfrac{\underline{x}}{|\underline{x}|}$, which is uniquely specified with properties similar to a classical imaginary unit. In other words,
\begin{align*}
	\omega \in \mathbb{S}^{n-1}=\left\{ x\in \mathbb{R} ^{n+1}: x^2=-1 \right\} .
\end{align*}
When $x$ is real, then $r=0$ and for every $\omega \in \mathbb{S}^{n-1}$ one can write $x=x+\omega \cdot 0$.
\begin{definition}[Slice monogenic functions] Let $\Omega\subset\mathbb{R}^{n+1}$ be a domain and a function $f\,:\, \Omega \longrightarrow \mathbb{R} ^n$ is  \emph{(left) slice monogenic} if, for every direction $\omega \in S^{n-1}$, the function $f_{\omega}$ restricted to the subset $\Omega_{\omega} = \Omega \cap (\mathbb{R} \oplus \omega \mathbb{R}) \subseteq \mathbb{R}^2$ exhibits holomorphicity, meaning it possesses continuous partial derivatives and fulfills the condition
	\begin{align*}
		\left( \partial _{x_0}+\omega \partial _r \right) f_{\omega}\left( x_0+r\omega \right) =0
	\end{align*}
	for all points $x_0+r\omega \in \Omega _{\omega}$.
\end{definition}
There is also an alternative approach to define slice monogenic functions given by Ghiloni and Perotti \cite{Gh0} in 2011 with the concept of stem functions. Many works in the theory of slice monogenic functions have been done with this approach, for instance, \cite{Di,Gh1,Pe}.  We will review this in a more general setting in the following section.
%%%%%%%%   Generalized slice monogenic functions   %%%%%%%%
\section{Generalized partial-slice monogenic functions}
In \cite{Xu,Xu1}, the authors generalized slice monogenic functions to the so-called generalized partial-slice monogenic functions. In particular, they pointed out that the theory of generalized partial-slice monogenic functions is a synthesis of classical Clifford analysis and the theory of slice monogenic functions. Here, we review some definitions and properties needed for the rest of this article. For more systematic details, we refer the reader to \cite{Xu,Xu1}.
\par
Let $p$ and $q$ be non-negative and positive integers, respectively. We consider functions $f\,:\,\Omega \longrightarrow \mathbb{R} _{p+q}$, where $\Omega\subset\mathbb{R}^{p+q+1}$ is a domain. An element $\boldsymbol{x} \in \mathbb{R}^{p+q+1}=\mathbb{R}^{p+1} \oplus \mathbb{R}^q$, can be identified with a paravector in $\mathbb{R}_{p+q}$ in the following manner
\begin{align*}
	\boldsymbol{x}=\boldsymbol{x}_p+\underline{\boldsymbol{x}}_q\in \mathbb{R} ^{p+1}\oplus \mathbb{R}^q,\ \boldsymbol{x}_p=\sum_{i=0}^p{x_ie_i},\ \underline{\boldsymbol{x}}_q=\sum_{i=p+1}^{p+q}{x_ie_i}.
\end{align*}
In this context, we define the generalized Cauchy-Riemann operator and the Euler operator as 
\begin{align*}
	&D_{\boldsymbol{x}}=\sum_{i=0}^{p+q}{e_i\partial_{x_i}}=\sum_{i=0}^p{e_i\partial_{x_i}}+\sum_{i=p+1}^{p+q}{e_i\partial_{x_i}} =:D_{\boldsymbol{x}_p}+D_{\underline{\boldsymbol{x}}_q},\\
	&\mathbb{E}_{\boldsymbol{x}}=\sum_{i=0}^{p+q}{x_i\partial_{x_i}}=\sum_{i=0}^p{x_i\partial _{x_i}}+\sum_{i=p+1}^{p+q}{x_i\partial_{x_i}}=: \mathbb{E} _{\boldsymbol{x}_p}+\mathbb{E}_{\underline{\boldsymbol{x}}_q}.
\end{align*}
Note that in these definitions, we have utilized the notation $D_{\boldsymbol{x}_p}$ and $D_{\underline{\boldsymbol{x}}_q}$ to represent the operators acting on the components $\boldsymbol{x}_p$ and $\underline{\boldsymbol{x}}_q$ respectively, and similarly for the Euler operators.
\par
We use $\mathbb{S}$ to represent the unit sphere in $\mathbb{R}^q$, where its elements is denoted by $\underline{\boldsymbol{x}}_q = \sum_{i=p+1}^{p+q} x_i e_i$, fulfilling the condition
\begin{align*}
	\mathbb{S} =\left\{ \underline{\boldsymbol{x}}_q:{\underline{\boldsymbol{x}}_q}^2=-1 \right\} =\left\{ \underline{\boldsymbol{x}}_q=\sum_{i=p+1}^{p+q}{x_ie_i}:\sum_{i=p+1}^{p+q}{{x_i}^2}=1 \right\}.
\end{align*}
It is noteworthy that for any non-zero $\underline{\boldsymbol{x}}_q$, there exists a unique positive real number $r \in \mathbb{R}^+$ and a unique unit vector $\underline{\omega} \in \mathbb{S}$, which represents the direction of $\underline{\boldsymbol{x}}_q$, such that $\underline{\boldsymbol{x}}_q = r\underline{\omega}$, where
\begin{align*}
	r=\left| \underline{\boldsymbol{x}}_q \right|,\underline{\omega}=\frac{\underline{\boldsymbol{x}}_q}{\left| \underline{\boldsymbol{x}}_q \right|}.
\end{align*}
If $\underline{\boldsymbol{x}}_q = 0$, we set $r = 0$, and $\underline{\omega}$ is not uniquely determined, in other words, $\underline{\boldsymbol{x}}_q = \boldsymbol{0} = \boldsymbol{x}_p + \underline{\omega} \cdot 0$ for arbitrary $\underline{\omega} \in \mathbb{S}$. 
\par
For an open set $\Omega \subset \mathbb{R}^{p+q+1}$, we introduce the notation
\begin{align*}
	\Omega _{\underline{\omega}}:=\Omega \cap \left( \mathbb{R} ^{p+1}\oplus \underline{\omega}\mathbb{R} \right) \subseteq \mathbb{R} ^{p+2},
\end{align*}
which defines a subset of $\mathbb{R}^{p+2}$ by intersecting $\Omega$ with the subspace spanned by $\mathbb{R}^{p+1}$ and the line through the origin determined by $\underline{\omega}$. In order to develop a theory of generalized partial-slice monogenic functions, we also need some restrictions on the domains considered.
\begin{definition}
	%Let $\Omega$ be a domain in $\mathbb{R} ^{p+q+1}$. 
	%\begin{enumerate}
		%\item $\Omega$ is called a slice domain if $\Omega \cap \mathbb{R}^{p+1} \neq \emptyset$ and for every $\underline{\omega} \in \mathbb{S}$, $\Omega_{\underline{\omega}}$ is a domain in $\mathbb{R}^{p+2}$.
		Let $\Omega$ be a domain in $\mathbb{R} ^{p+q+1}$, and $\Omega$ is called \emph{partially symmetric} with respect to $\mathbb{R}^{p+1}$ (or $p$-symmetric for short) if, for $\boldsymbol{x}_p \in \mathbb{R}^{p+1}$, $r \in \mathbb{R}^+$, and $\underline{\omega} \in \mathbb{S}$,$$\boldsymbol{x}=\boldsymbol{x}_p+r\underline{\omega}\in \Omega \Longrightarrow \left[ \boldsymbol{x} \right] :=\boldsymbol{x}_p+r\mathbb{S} =\left\{ \boldsymbol{x}_p+r\underline{\omega}, \underline{\omega}\in \mathbb{S} \right\} \subseteq \Omega .$$
	%\end{enumerate}
\end{definition}
Now, we review the approach to study the theory of generalized partial-slice functions with the concept of stem functions as follows.
\begin{definition}
	A function $F$ : $D\longrightarrow \mathbb{R} _{p+q}\otimes _{\mathbb{R}}\mathbb{C}$ in an open set $D\subseteq \mathbb{R} ^{p+2}$, which is invariant under the reflection of the $(p+2)\text{-}th$ variable, is called a \emph{stem function} provided its $\mathbb{R}_{p+q}$-valued components, $F_1$ and $F_2$, where $F = F_1 + iF_2$, fulfill the conditions
	\begin{align}\label{Stem Function Costituent}
		F_1\left( \boldsymbol{x}_p,-r \right) =F_1\left( \boldsymbol{x}_p,r \right) ,\quad F_2\left( \boldsymbol{x}_p,-r \right) =-F_2\left( \boldsymbol{x}_p,r \right) ,\quad \left( \boldsymbol{x}_p,r \right) \in D.
	\end{align} 
	Each stem function $F$ induces a (left) generalized partial-slice function $f=\mathcal{I}(F)$ from $\Omega_D$ to $\mathbb{R}_{p+q}$, defined as $$f\left( \boldsymbol{x} \right) :=F_1\left( \boldsymbol{x}^{\prime} \right) +\underline{\omega}F_2\left( \boldsymbol{x}^{\prime} \right) ,\quad \boldsymbol{x}^{\prime}=\left( \boldsymbol{x}_p,r \right) ,\boldsymbol{x}=\boldsymbol{x}_p+r\underline{\omega}\in \mathbb{R} ^{p+q+1},\underline{\omega}\in \mathbb{S} .$$
\end{definition}

\begin{definition}
Let $D\subseteq \mathbb{R} ^{p+2}$ be a domain, which is invariant under the reflection of the $(p+2)\text{-}th$ variable. The \emph{p-symmetric completion} $\Omega_D\subset\mathbb{R}^{p+q}$ of $D$ is defined by
\begin{align*}
\Omega_D=\bigcup_{\underline{\omega}\in\mathbb{S}}\left\{x_p+r\underline{\omega}: \exists x_p\in\mathbb{R}^{p+1}, r\geq 0, (x_p,r)\in D\right\}.
\end{align*}
\end{definition}
Note that a domain $\Omega\subset\mathbb{R}^{p+q+1}$ is $p$-symmetric if and only if there exists a domain $D\subset\mathbb{R}^{p+2}$ such that $\Omega=\Omega_D$. In the rest of the article, we use $\Omega_D$ to stand for a $p$-symmetric domain in $\mathbb{R}^{p+q+1}$.
\par
Now, we denote the set of all induced generalized partial-slice functions on $\Omega _D$ by  $$\mathcal{G} \mathcal{S} \left( \Omega _D \right) :=\left\{ f=\mathcal{I} \left( F \right) : F\ \text{is a stem function on}\ D \right\} .$$
\begin{definition}
	Let $f \in \mathcal{G} \mathcal{S} (\Omega _D)$. The function $f$ is called \emph{generalized partial-slice monogenic of type $(p,q)$} if its stem function $F=F_1+iF_2$ satisfies the following generalized Cauchy-Riemann equations
	$$\begin{cases}  
		D_{\boldsymbol{x}_p}F_1-\partial _rF_2=0, \\  
		\overline{D_{\boldsymbol{x}_p}}F_2-\partial _rF_1=0.  
	\end{cases}  $$
\end{definition}
Similar as the case of slice functions, there is also a representation formula for generalized partial-slice functions of type $(p,q)$ as follows.
\begin{theorem}[Representation Formula]\label{*Representation Formula*}
	\cite{Xu} Let $f\in \mathcal{G} \mathcal{S} \left( \Omega _D \right)$. Then it holds that, for every $\boldsymbol{x}=\boldsymbol{x}_p+r\underline{\omega}\in \Omega _D$ with $\underline{\omega}\in \mathbb{S}$,
	\begin{align}
		f\left( \boldsymbol{x} \right)
		=&\left( \underline{\omega}-\underline{\omega}_2 \right) \left( \underline{\omega}_1-\underline{\omega}_2 \right) ^{-1}f\left( \boldsymbol{x}_p+r\underline{\omega}_1 \right)\nonumber\\
& -\left( \underline{\omega}-\underline{\omega}_1 \right) \left( \underline{\omega}_1-\underline{\omega}_2 \right) ^{-1}f\left( \boldsymbol{x}_p+r\underline{\omega}_2 \right) ,
	\end{align}
	for all $\underline{\omega}_1\ne \underline{\omega}_2\in \mathbb{S} $. In particular, if $\underline{\omega}_1=-\underline{\omega}_2 =\underline{\eta}\in \mathbb{S} $, we have
	\begin{align*}
		f\left( \boldsymbol{x} \right) &=\frac{1}{2}\left( 1-\underline{\omega}\underline{\eta} \right) f\left( \boldsymbol{x}_p+r\underline{\eta} \right) +\frac{1}{2}\left( 1+\underline{\omega}\underline{\eta} \right) f\left( \boldsymbol{x}_p-r\underline{\eta} \right)\\
		&=\frac{1}{2}\left( f\left( \boldsymbol{x}_p+r\underline{\eta} \right) +f\left( \boldsymbol{x}_p-r\underline{\eta} \right) \right) +\frac{1}{2}\underline{\omega}\underline{\eta}\left( f\left( \boldsymbol{x}_p-r\underline{\eta} \right) -f\left( \boldsymbol{x}_p+r\underline{\eta} \right) \right) .
	\end{align*}
\end{theorem}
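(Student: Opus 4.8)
The plan is to verify the two displayed identities directly from the structure of a generalized partial-slice function $f = \mathcal{I}(F)$ with $F = F_1 + iF_2$. The point is that on the sphere $\boldsymbol{x}_p + r\mathbb{S}$ the value $f(\boldsymbol{x}_p + r\underline{\omega}) = F_1(\boldsymbol{x}_p, r) + \underline{\omega} F_2(\boldsymbol{x}_p, r)$ is an affine function of $\underline{\omega}$, with coefficients $F_1(\boldsymbol{x}_p, r)$ and $F_2(\boldsymbol{x}_p, r)$ that do \emph{not} depend on the choice of imaginary unit. So the identity is really the elementary fact that an affine function is determined by its values at two distinct points, rewritten with the non-commutative care that Clifford multiplication demands.

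First I would fix $\boldsymbol{x}_p \in \mathbb{R}^{p+1}$, $r \geq 0$, write $a := F_1(\boldsymbol{x}_p, r)$ and $b := F_2(\boldsymbol{x}_p, r)$, so that $f(\boldsymbol{x}_p + r\underline{\omega}) = a + \underline{\omega} b$ for every $\underline{\omega} \in \mathbb{S}$. Then for $\underline{\omega}_1 \neq \underline{\omega}_2$ I would compute the right-hand side of the claimed formula by substituting $f(\boldsymbol{x}_p + r\underline{\omega}_i) = a + \underline{\omega}_i b$ and collecting terms. The coefficient of $a$ is $(\underline{\omega} - \underline{\omega}_2)(\underline{\omega}_1 - \underline{\omega}_2)^{-1} - (\underline{\omega} - \underline{\omega}_1)(\underline{\omega}_1 - \underline{\omega}_2)^{-1} = (\underline{\omega}_1 - \underline{\omega}_2)(\underline{\omega}_1 - \underline{\omega}_2)^{-1} = 1$, and the coefficient of $b$ is $(\underline{\omega} - \underline{\omega}_2)(\underline{\omega}_1 - \underline{\omega}_2)^{-1}\underline{\omega}_1 - (\underline{\omega} - \underline{\omega}_1)(\underline{\omega}_1 - \underline{\omega}_2)^{-1}\underline{\omega}_2$; one checks this last expression equals $\underline{\omega}$ by bringing everything over the common factor $(\underline{\omega}_1 - \underline{\omega}_2)^{-1}$ on the right and simplifying the numerator $(\underline{\omega} - \underline{\omega}_2)\underline{\omega}_1 - (\underline{\omega} - \underline{\omega}_1)\underline{\omega}_2 = \underline{\omega}\underline{\omega}_1 - \underline{\omega}_2\underline{\omega}_1 - \underline{\omega}\underline{\omega}_2 + \underline{\omega}_1\underline{\omega}_2 = \underline{\omega}(\underline{\omega}_1 - \underline{\omega}_2) + (\underline{\omega}_1\underline{\omega}_2 - \underline{\omega}_2\underline{\omega}_1)$. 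Here the one subtlety is that $\underline{\omega}_1$ and $\underline{\omega}_2$ need not anticommute in general, so the term $\underline{\omega}_1\underline{\omega}_2 - \underline{\omega}_2\underline{\omega}_1$ does not obviously vanish; I would need to argue that $(\underline{\omega}_1 - \underline{\omega}_2)^{-1}$ makes sense and that, after right-multiplication by it, the spurious commutator term is actually killed — which is exactly the place where the hypothesis $\underline{\omega}_1, \underline{\omega}_2 \in \mathbb{S}$ (hence $\underline{\omega}_i^2 = -1$) enters. I expect this non-commutative bookkeeping to be the main obstacle, and the resolution is that $(\underline{\omega}_1 - \underline{\omega}_2)$ is invertible precisely because $|\underline{\omega}_1 - \underline{\omega}_2|^2 = 2 - 2\langle \underline{\omega}_1, \underline{\omega}_2\rangle > 0$, and that in the relevant $2$-dimensional situation the identity reduces to the classical affine-interpolation identity; alternatively one verifies the coefficient-of-$b$ computation by noting $(\underline{\omega}-\underline{\omega}_2)\underline{\omega}_1 - (\underline{\omega}-\underline{\omega}_1)\underline{\omega}_2$ and $\underline{\omega}(\underline{\omega}_1 - \underline{\omega}_2)$ differ by an element that annihilates on multiplication by $(\underline{\omega}_1-\underline{\omega}_2)^{-1}$.

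Finally, for the special case $\underline{\omega}_1 = -\underline{\omega}_2 = \underline{\eta}$, I would substitute directly: $(\underline{\omega}_1 - \underline{\omega}_2)^{-1} = (2\underline{\eta})^{-1} = -\tfrac12\underline{\eta}$ since $\underline{\eta}^2 = -1$, giving $(\underline{\omega} + \underline{\eta})(-\tfrac12\underline{\eta}) = \tfrac12(1 - \underline{\omega}\underline{\eta})$ for the first coefficient and $-(\underline{\omega} - \underline{\eta})(-\tfrac12\underline{\eta}) = \tfrac12(1 + \underline{\omega}\underline{\eta})$ for the second, which yields the first displayed special-case identity. The second special-case line is then just an algebraic regrouping of the first, separating the symmetric and antisymmetric combinations of $f(\boldsymbol{x}_p \pm r\underline{\eta})$, and requires no further input. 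Since the general formula was already proved, one could alternatively obtain both special-case lines as immediate corollaries by plugging in $\underline{\omega}_2 = -\underline{\omega}_1$, and I would present it that way to keep the argument short.
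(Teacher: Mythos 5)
First, a point of reference: the paper does not prove this theorem at all --- it is imported verbatim from \cite{Xu}, so there is no in-paper proof to compare against. Your overall strategy (on each sphere $\boldsymbol{x}_p+r\mathbb{S}$ the function is affine in the unit, $f(\boldsymbol{x}_p+r\underline{\omega})=a+\underline{\omega}b$ with $a=F_1(\boldsymbol{x}_p,r)$, $b=F_2(\boldsymbol{x}_p,r)$ independent of $\underline{\omega}$, so one verifies a two-point interpolation identity) is the standard and correct one, the coefficient-of-$a$ computation is fine because $a$ sits to the right of both coefficients, and your derivation of the special case $\underline{\omega}_1=-\underline{\omega}_2=\underline{\eta}$ is correct as written.

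However, the coefficient-of-$b$ step contains a genuine flaw. You propose to ``bring everything over the common factor $(\underline{\omega}_1-\underline{\omega}_2)^{-1}$ on the right,'' i.e.\ to replace $(\underline{\omega}-\underline{\omega}_2)(\underline{\omega}_1-\underline{\omega}_2)^{-1}\underline{\omega}_1$ by $(\underline{\omega}-\underline{\omega}_2)\underline{\omega}_1(\underline{\omega}_1-\underline{\omega}_2)^{-1}$; this is not a legal move, since the vector $(\underline{\omega}_1-\underline{\omega}_2)^{-1}$ does not commute with $\underline{\omega}_1$ or $\underline{\omega}_2$ unless they are parallel. Worse, the patch you then suggest --- that the leftover commutator $(\underline{\omega}_1\underline{\omega}_2-\underline{\omega}_2\underline{\omega}_1)$ is ``killed'' upon right multiplication by $(\underline{\omega}_1-\underline{\omega}_2)^{-1}$ --- cannot work: that commutator is a nonzero bivector whenever $\underline{\omega}_1,\underline{\omega}_2$ are linearly independent, and multiplying a nonzero element by an invertible element never gives zero. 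The correct repair keeps the inverse in the middle. Writing $\delta=\underline{\omega}_1-\underline{\omega}_2$ and expanding, the coefficient of $b$ equals
\begin{align*}
\underline{\omega}\,\delta^{-1}(\underline{\omega}_1-\underline{\omega}_2)+\big(\underline{\omega}_1\delta^{-1}\underline{\omega}_2-\underline{\omega}_2\delta^{-1}\underline{\omega}_1\big)
=\underline{\omega}+\big(\underline{\omega}_1\delta^{-1}\underline{\omega}_2-\underline{\omega}_2\delta^{-1}\underline{\omega}_1\big),
\end{align*}
and the bracket vanishes because $\underline{\omega}_1\delta\,\underline{\omega}_2=\underline{\omega}_1^2\underline{\omega}_2-\underline{\omega}_1\underline{\omega}_2^2=\underline{\omega}_1-\underline{\omega}_2=\delta$ and likewise $\underline{\omega}_2\delta\,\underline{\omega}_1=\delta$, using $\underline{\omega}_1^2=\underline{\omega}_2^2=-1$; since $\delta^{-1}=-\delta/|\delta|^2$ is a real multiple of $\delta$, both products $\underline{\omega}_1\delta^{-1}\underline{\omega}_2$ and $\underline{\omega}_2\delta^{-1}\underline{\omega}_1$ equal $\delta^{-1}$. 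So the hypothesis $\underline{\omega}_i\in\mathbb{S}$ enters exactly where you predicted, but through this three-factor identity rather than through the cancellation you describe; with that substitution your argument goes through.
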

Recall that the Cauchy kernel to monogenic functions over $\mathbb{R}^{p+2}$ is given by 
	\begin{align*}
		E\left( \boldsymbol{x} \right) =\frac{1}{\sigma _{p+1}}\frac{\overline{\boldsymbol{x}}}{\left| \boldsymbol{x} \right|^{p+2}},\ x\in \mathbb{R}^{p+2} \setminus \{0\} ,
	\end{align*}
	where $\sigma _{p+1}=2\frac{\Gamma ^{p+2}\left( \frac{1}{2} \right)}{\Gamma \left( \frac{p+2}{2} \right)}$ is the surface area of the unit sphere in $\mathbb{R}^{p+2}$. With the representation formula for generalized partial-slice monogenic functions given in Theorem \ref{*Representation Formula*}, it is natural to define the generalized partial-slice Cauchy kernel as follows.
\begin{definition}
	Given $\boldsymbol{y}\in \mathbb{R} ^{p+q+1}$, we call the function $\mathcal{E} _{\boldsymbol{y}}\left( \cdot \right) $ left generalized partial-slice Cauchy kernel defined by 
	\begin{eqnarray}
		\mathcal{E} _{\boldsymbol{y}}\left( \boldsymbol{x} \right) =\frac{1}{2}\left( 1-\underline{\omega}\underline{\eta} \right) E_{\boldsymbol{y}}\left( \boldsymbol{x}_p+r\underline{\eta} \right) +\frac{1}{2}\left( 1+\underline{\omega}\underline{\eta} \right) E_{\boldsymbol{y}}\left( \boldsymbol{x}_p-r\underline{\eta} \right) ,
	\end{eqnarray}
where $\underline{\omega},\underline{\eta}$ are defined as in Theorem \ref{*Representation Formula*}.
\end{definition}
With the Cauchy-Pompeiu formula in the theory of monogenic functions and the representation formula given in Theorem \ref{*Representation Formula*}, a Cauchy-Pompeiu formula in the context of partial-slice monogenic functions has been discovered in \cite{Xu} for the case when $\boldsymbol{x}\in U$. There is also a Cauchy integral formula for the case when $\boldsymbol{x}\in\mathbb{R}^{p+q+1}\backslash \overline{U}$, which is stated as follows.
\begin{theorem}[Cauchy integral formula for the exterior domain] Let $\Gamma$ denote a Jordan surface, partitioning the space into an exterior domain denoted as ${U_{\underline{\eta}}}^-$ and an interior domain ${U_{\underline{\eta}}}^+$, for some fixed $\underline{\eta} \in \mathbb{S}$. The orientation of $\Gamma$ is chosen such that its normal vector points towards ${U_{\underline{\eta}}}^-$. Consider the function $f$ that is left generalized partial-slice monogenic in ${U_{\underline{\eta}}}^-$, continuously differentiable in ${U_{\underline{\eta}}}^- \cup \Gamma$, and possesses a limit value $f(\infty)$ at $x = \infty$. Then, the following equation holds
	\begin{eqnarray*}
		\int_{\Gamma}{\mathcal{E} _{\boldsymbol{y}}\left( \boldsymbol{x} \right) n\left( \boldsymbol{y} \right) f\left( \boldsymbol{y} \right)}dS_{\underline{\eta}} \left( \boldsymbol{y} \right) =
		\begin{split}
			\begin{cases}
				-f\left( \boldsymbol{x} \right) +f\left( \infty \right) &,\boldsymbol{x}\in {U_{\underline{\eta}}}^-,\\
				f\left( \infty \right) &,\boldsymbol{x}\in {U_{\underline{\eta}}}^+,
			\end{cases}   
		\end{split}    
	\end{eqnarray*} 
	where $n(\boldsymbol{y}) = \sum_{i=0}^p{n_i(\boldsymbol{y}) e_i + n_{p+1}(\boldsymbol{y}) \underline{\eta}}$ represents the unit exterior normal vector to $\partial U_{\underline{\eta}}$ at $\boldsymbol{y}$.
\end{theorem}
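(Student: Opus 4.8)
The plan is to reduce the assertion to the classical Cauchy integral formula for monogenic functions on the single slice $\Pi_{\underline{\eta}}:=\mathbb{R}^{p+1}\oplus\underline{\eta}\,\mathbb{R}\cong\mathbb{R}^{p+2}$, and then to propagate it off that slice by means of the Representation Formula (Theorem \ref{*Representation Formula*}). First I would record the reduction of the data to the slice. Since $\underline{\eta}^2=-1$, a point $\boldsymbol{x}=\boldsymbol{x}_p+r\underline{\eta}\in\Pi_{\underline{\eta}}$ has $\underline{\omega}=\underline{\eta}$, so $\tfrac12(1-\underline{\eta}\,\underline{\eta})=1$ and $\tfrac12(1+\underline{\eta}\,\underline{\eta})=0$; hence $\mathcal{E}_{\boldsymbol{y}}(\boldsymbol{x}_p+r\underline{\eta})=E_{\boldsymbol{y}}(\boldsymbol{x}_p+r\underline{\eta})$ and, symmetrically, $\mathcal{E}_{\boldsymbol{y}}(\boldsymbol{x}_p-r\underline{\eta})=E_{\boldsymbol{y}}(\boldsymbol{x}_p-r\underline{\eta})$. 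In other words, on $\Pi_{\underline{\eta}}$ the generalized partial-slice Cauchy kernel reduces to the ordinary monogenic Cauchy kernel of $\mathbb{R}^{p+2}$. Likewise, the restriction $f_{\underline{\eta}}:=f|_{\Pi_{\underline{\eta}}}$ is left monogenic on ${U_{\underline{\eta}}}^-$ as a function on $\mathbb{R}^{p+2}$ (a structural consequence of the generalized Cauchy--Riemann system defining generalized partial-slice monogenicity, cf. \cite{Xu,Xu1}), is continuously differentiable up to $\Gamma$, and tends to $f(\infty)$ at infinity.

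Next I would prove the identity for $\boldsymbol{x}$ lying on $\Pi_{\underline{\eta}}$ by applying the classical Cauchy--Pompeiu formula for monogenic functions in $\mathbb{R}^{p+2}$ (see e.g. \cite{Br,De}) on the region $B_R\cap{U_{\underline{\eta}}}^-$ bounded by $\Gamma$ and a large sphere $\partial B_R$. Monogenicity of $f_{\underline{\eta}}$ annihilates the solid (Teodorescu) term, leaving the boundary integral over $\Gamma$ (which carries the normal $-n(\boldsymbol{y})$ from the orientation induced on $B_R\cap{U_{\underline{\eta}}}^-$) together with the one over $\partial B_R$. As $R\to\infty$ the $\partial B_R$-integral tends to $f(\infty)$: one writes $f_{\underline{\eta}}=f(\infty)+(f_{\underline{\eta}}-f(\infty))$, uses $\int_{\partial B_R}E(\boldsymbol{y}-\boldsymbol{x})\,n(\boldsymbol{y})\,dS=1$ (the Cauchy formula for the monogenic constant $1$) for the first summand, and the decay $|E(\boldsymbol{y}-\boldsymbol{x})|=O(|\boldsymbol{y}|^{-(p+1)})$ against $\sup_{\partial B_R}|f_{\underline{\eta}}-f(\infty)|\to0$ for the second. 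This yields, for $\boldsymbol{x}\in\Pi_{\underline{\eta}}$,
\[
\int_{\Gamma}E_{\boldsymbol{y}}(\boldsymbol{x})\,n(\boldsymbol{y})f(\boldsymbol{y})\,dS_{\underline{\eta}}(\boldsymbol{y})=
\begin{cases}
-f(\boldsymbol{x})+f(\infty), & \boldsymbol{x}\in{U_{\underline{\eta}}}^-,\\
f(\infty), & \boldsymbol{x}\in{U_{\underline{\eta}}}^+,
\end{cases}
\]
the second case coming from the same computation with $\boldsymbol{x}$ taken outside $B_R\cap{U_{\underline{\eta}}}^-$. By the first step, $E_{\boldsymbol{y}}$ may be replaced by $\mathcal{E}_{\boldsymbol{y}}$ here, so this is exactly the asserted formula on the slice.

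Finally I would remove the restriction $\underline{\omega}=\underline{\eta}$. By its very definition $\mathcal{E}_{\boldsymbol{y}}(\boldsymbol{x}_p+r\underline{\omega})=\tfrac12(1-\underline{\omega}\,\underline{\eta})\mathcal{E}_{\boldsymbol{y}}(\boldsymbol{x}_p+r\underline{\eta})+\tfrac12(1+\underline{\omega}\,\underline{\eta})\mathcal{E}_{\boldsymbol{y}}(\boldsymbol{x}_p-r\underline{\eta})$, and the $\boldsymbol{y}$-independent factors $\tfrac12(1\mp\underline{\omega}\,\underline{\eta})$ pass through the integral over $\Gamma$. Substituting the slice identity at the two points $\boldsymbol{x}_p\pm r\underline{\eta}$ — both of which lie on the same side of $\Gamma$ as $\boldsymbol{x}$, by the $p$-symmetry built into the domain — the $f(\infty)$ contributions sum to $\tfrac12(1-\underline{\omega}\,\underline{\eta})f(\infty)+\tfrac12(1+\underline{\omega}\,\underline{\eta})f(\infty)=f(\infty)$, while the remaining terms recombine, through the Representation Formula of Theorem \ref{*Representation Formula*}, into $-f(\boldsymbol{x}_p+r\underline{\omega})=-f(\boldsymbol{x})$ in the exterior case and into $0$ in the interior case. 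I expect the main obstacle to be the behaviour at infinity in the slice step — in particular the orientation bookkeeping on $\partial B_R$ and confirming that the spherical integral converges precisely to $f(\infty)$ (and not to $0$ or $-f(\infty)$) — together with the care needed to match the two sides ${U_{\underline{\eta}}}^{\pm}$ of $\Gamma$ with the $p$-symmetric domain carrying $f$; the algebra in the last step is routine once the Representation Formula is invoked.
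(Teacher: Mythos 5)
Your proposal is correct and follows essentially the same route as the paper: truncate the exterior domain by a large sphere, apply the interior Cauchy integral formula there, split $f=f(\infty)+(f-f(\infty))$ on the sphere, and let the radius tend to infinity. The only organizational difference is that you run the limit argument for the classical kernel $E_{\boldsymbol{y}}$ on the slice and invoke the Representation Formula at the end, whereas the paper invokes the already-generalized interior Cauchy formula and carries out the same estimate directly on $\mathcal{E}_{\boldsymbol{y}}$; the mathematical content is the same.
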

\begin{proof}
	We choose a sphere $\Gamma _{\rho}=\left\{ \boldsymbol{x}:\left| \boldsymbol{x} \right|=\rho \right\} $ with sufficiently large radius $\rho$ which contains $\Gamma$ and ${U_{\underline{\eta}}}^+$, and consider the domain ${U_{\underline{\eta }}}_{\rho}:={U_{\underline{\eta }}}^-\cap \left\{ \left| \boldsymbol{x} \right|<\rho \right\}$. Its boundary is ${\partial U_{\underline{\eta }}}_{\rho}:=\Gamma _{\rho}\cup \left( -\Gamma \right) $ taking into account the orientation of $\Gamma$. Then the Cauchy integral formula for $\boldsymbol{x}\in {U_{\underline{\eta }}}_{\rho}$ yields 
	\begin{align*}
		f\left( \boldsymbol{x} \right)  =&-\int_{\Gamma}{\mathcal{E} _{\boldsymbol{y}}\left( \boldsymbol{x} \right) n\left( \boldsymbol{y} \right) f\left( \boldsymbol{y} \right) dS_{\underline{\eta}} \left( \boldsymbol{y} \right)}+f\left( \infty \right) \int_{\left| \boldsymbol{y} \right|=\rho}{\mathcal{E} _{\boldsymbol{y}}\left( \boldsymbol{x} \right) n\left( \boldsymbol{y} \right) dS_{\underline{\eta}} \left( \boldsymbol{y} \right)}\\
		&+\int_{\left| \boldsymbol{y} \right|=\rho}{\mathcal{E} _{\boldsymbol{y}}\left( \boldsymbol{x} \right) n\left( \boldsymbol{y} \right) \left( f\left( \boldsymbol{y} \right) -f\left( \infty \right) \right) dS_{\underline{\eta}} \left( \boldsymbol{y} \right)}\\
		=&-\int_{\Gamma}{\mathcal{E} _{\boldsymbol{y}}\left( \boldsymbol{x} \right) n\left( \boldsymbol{y} \right) f\left( \boldsymbol{y} \right) dS_{\underline{\eta}} \left( \boldsymbol{y} \right)}+f\left( \infty \right) +R,\\
		R:=&\int_{\left| \boldsymbol{y} \right|=\rho}{\mathcal{E} _{\boldsymbol{y}}\left( \boldsymbol{x} \right) n\left( \boldsymbol{y} \right) \left( f\left( \boldsymbol{y} \right) -f\left( \infty \right) \right) dS_{\underline{\eta}} \left( \boldsymbol{y} \right)}.
	\end{align*}
	From our assumption the inequality $\left| f\left( \boldsymbol{y} \right) -f\left( \infty \right) \right|<\varepsilon $ follows for sufficiently large $\rho$. If we assume moreover $\rho >2\left| \boldsymbol{x} \right|$ we get 
	\begin{align*}
		\left| E_{\boldsymbol{y}}\left( \boldsymbol{x} \right) \right|=\frac{1}{\sigma _{p+1}}\frac{1}{\left| \boldsymbol{y}-\boldsymbol{x} \right|^{p+1}}\leqslant \frac{1}{\sigma _{p+1}}\frac{2^{p+1}}{\rho ^{p+1}},
	\end{align*}
	because of $\left| \boldsymbol{y}-\boldsymbol{x} \right|\geqslant \left| \boldsymbol{y} \right|-\left| \boldsymbol{x} \right|>\displaystyle\frac{\rho}{2}$. Then, we have
	\begin{align*}
		\left| \mathcal{E}_{\boldsymbol{y}}\left( \boldsymbol{x} \right) \right|&=\left| \frac{1}{2}\left( 1-\underline{\omega}\underline{\eta} \right) E_{\boldsymbol{y}}\left( \boldsymbol{x}_p+r\underline{\eta} \right) +\frac{1}{2}\left( 1+\underline{\omega}\underline{\eta} \right) E_{\boldsymbol{y}}\left( \boldsymbol{x}_p-r\underline{\eta} \right) \right|\\
		&\leqslant \frac{1}{2}\left| \left( 1-\underline{\omega }\underline{\eta } \right) E_{\boldsymbol{y}}\left( \boldsymbol{x}_p+r\underline{\eta } \right) \right|+\frac{1}{2}\left| \left( 1+\underline{\omega }\underline{\eta } \right) E_{\boldsymbol{y}}\left( \boldsymbol{x}_p-r\underline{\eta } \right) \right|\\
		&\leqslant \frac{1}{2}\left| 1-\underline{\omega }\underline{\eta } \right|\frac{1}{\sigma _{p+1}}\frac{2^{p+1}}{\rho ^{p+1}}+\frac{1}{2}\left| 1+\underline{\omega }\underline{\eta } \right|\frac{1}{\sigma _{p+1}}\frac{2^{p+1}}{\rho ^{p+1}}\leqslant \frac{2}{\sigma _{p+1}}\frac{2^{p+1}}{\rho ^{p+1}}.
	\end{align*}
	Hence, we can obtain that
	\begin{align*}
		\left| R \right|&\leqslant \int_{\left| \boldsymbol{y} \right|=\rho}{\left| \mathcal{E} _{\boldsymbol{y}}\left( \boldsymbol{x} \right) \right|\left| n\left( \boldsymbol{y} \right) \right|\left| f\left( \boldsymbol{y} \right) -f\left( \infty \right) \right|dS_{\underline{\eta}} \left( \boldsymbol{y} \right)}\\
		&\leqslant \frac{\varepsilon \cdot 2^{p+2}}{\sigma _{p+1}}\int_{\left| \boldsymbol{t} \right|=1}{ds_{\boldsymbol{t}}}=\varepsilon \cdot 2^{p+2}.
	\end{align*}
	For $\rho \rightarrow \infty$  the value $\varepsilon$ can be chosen arbitrarily small, and the claim for $\boldsymbol{x}\in{U_{\underline{\eta}}}^-$ is proved.
	\par
	Our proof is also valid in the inner domain ${U_{\underline{\eta}}}^+$ as then the right-hand side contains the value $0$ instead of $f\left( \boldsymbol{x} \right) $ in the formulas above.
\end{proof}
\par
%%%%%%%%%%%%%%%%         XXXXXX         %%%%%%%%%%%%%%%
The integral 
\begin{align*}
\int_{\Gamma}{\mathcal{E} _{\boldsymbol{y}}\left( \boldsymbol{x} \right) n\left( \boldsymbol{y} \right) f\left( \boldsymbol{y} \right)}dS_{\underline{\eta}} \left( \boldsymbol{y} \right)
\end{align*}
in the previous theorem is the Cauchy-type integral in the context of generalized partial-slice monogenic functions. The boundary behavior of this integral is described by
a Plemelj-Sokhotski formula given below.
\begin{theorem}[Plemelj-Sokhotski formula]\label{PSF} Let the function $f=\mathcal{I} \left( F \right) \in \mathcal{G} \mathcal{S} \left( \Omega _D \right) $ with its stem function $F$ being H\"older continuous. Suppose $U$ is a p-symmetric domain in $\mathbb{R}^{p+q+1}$ such that $U_{\underline{\eta}} \subset \Omega_{\underline{\eta}}$ for some $\underline{\eta} \in \mathbb{S}$. Then, at any regular point $\boldsymbol{x}\in \partial U_{\underline{\eta}}$, we have
	\begin{align*}
		&n.t.\-\lim_{\boldsymbol{t}\rightarrow \boldsymbol{x}} \int_{\partial U_{\underline{\eta}}}{\mathcal{E} _{\boldsymbol{y}}\left( \boldsymbol{t} \right) n\left( \boldsymbol{y} \right) f\left( \boldsymbol{y} \right) dS_{\underline{\eta}} \left( \boldsymbol{y} \right)}\\
		=&\frac{1}{2}\left[ \pm f\left( \boldsymbol{x} \right) +2\int_{\partial U_{\underline{\eta}}}{\mathcal{E} _{\boldsymbol{y}}\left( \boldsymbol{x} \right) n\left( \boldsymbol{y} \right) f\left( \boldsymbol{y} \right) dS_{\underline{\eta}} \left( \boldsymbol{y} \right)} \right], 
	\end{align*}
	where the limit is taken non-tangentially (denoted as $n.t.\-\lim$) with $\boldsymbol{t} \in {U_{\underline{\eta}}}^{\pm}$, $U_{\underline{\eta}}^{+} = U_{\underline{\eta}}$, and $U_{\underline{\eta}}^{-} = \Omega_{\underline{\eta}} \backslash \overline{U_{\underline{\eta}}}$. The unit exterior normal vector to $\partial U_{\underline{\eta}}$ at $\boldsymbol{y}$ is given by
	$$n\left( \boldsymbol{y} \right) =\sum_{i=0}^p{n_i\left( \boldsymbol{y} \right) e_i+n_{p+1}\left( \boldsymbol{y} \right) \underline{\eta}}.$$
	%Here, $dS_{\underline{\eta}} $ represent the classical Lebesgue surface and volume elements in $\mathbb{R}^{p+2}$, respectively.
	% The non-tangential limit means that $\boldsymbol{t}$ approaches $\boldsymbol{x}$ within a circular cone whose symmetry axis is aligned with the normal direction of $\partial U_{\underline{\eta}}$ at $\boldsymbol{x}$.
\end{theorem}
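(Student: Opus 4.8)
The plan is to reduce the statement to the classical Plemelj--Sokhotski formula of Clifford analysis on the $(p+2)$-dimensional slice $\Omega_{\underline{\eta}}$. The first step is to notice that on this slice the generalized partial-slice Cauchy kernel degenerates to the ordinary one. Indeed, a point $\boldsymbol{t}$ of $\Omega_{\underline{\eta}}=\Omega\cap(\mathbb{R}^{p+1}\oplus\underline{\eta}\mathbb{R})$ can be written $\boldsymbol{t}=\boldsymbol{t}_p+|t|\,\underline{\omega}$ with $\underline{\omega}=\underline{\eta}$ if $t\geqslant 0$ and $\underline{\omega}=-\underline{\eta}$ if $t<0$, so $\underline{\omega}\,\underline{\eta}=\mp 1$ is a real scalar and exactly one of the factors $\frac12(1-\underline{\omega}\,\underline{\eta})$, $\frac12(1+\underline{\omega}\,\underline{\eta})$ equals $1$ while the other vanishes; substituting into the definition of $\mathcal{E}_{\boldsymbol{y}}$ I obtain $\mathcal{E}_{\boldsymbol{y}}(\boldsymbol{t})=E_{\boldsymbol{y}}(\boldsymbol{t})$ for every $\boldsymbol{t}\in\Omega_{\underline{\eta}}$, where $E_{\boldsymbol{y}}(\boldsymbol{t})=E(\boldsymbol{t}-\boldsymbol{y})$ is the Cauchy kernel of monogenic function theory over the paravector space $\mathbb{R}^{p+2}=\mathbb{R} e_0\oplus\cdots\oplus\mathbb{R} e_p\oplus\mathbb{R}\underline{\eta}$. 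This applies both to the points $\boldsymbol{t}\in U_{\underline{\eta}}^{\pm}$ from which the limit is taken and to the limiting point $\boldsymbol{x}\in\partial U_{\underline{\eta}}$.

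Granting this, for $\boldsymbol{t}$ on the slice the integral $\int_{\partial U_{\underline{\eta}}}\mathcal{E}_{\boldsymbol{y}}(\boldsymbol{t})\,n(\boldsymbol{y})\,f(\boldsymbol{y})\,dS_{\underline{\eta}}(\boldsymbol{y})$ is literally the classical Clifford Cauchy transform of the boundary density $f|_{\partial U_{\underline{\eta}}}$ over the hypersurface $\partial U_{\underline{\eta}}\subset\mathbb{R}^{p+2}$, with $n(\boldsymbol{y})=\sum_{i=0}^p n_i(\boldsymbol{y})e_i+n_{p+1}(\boldsymbol{y})\underline{\eta}$ the outward unit normal and $dS_{\underline{\eta}}$ the induced surface measure. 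Next I would verify this density satisfies the hypotheses of the classical theory: writing $\boldsymbol{y}=\boldsymbol{y}_p+s\underline{\eta}$ and using the stem relations \eqref{Stem Function Costituent} one gets $f(\boldsymbol{y})=F_1(\boldsymbol{y}_p,s)+\underline{\eta}F_2(\boldsymbol{y}_p,s)$ for every $s\in\mathbb{R}$ (the even, resp.\ odd, reflections of $F_1$, $F_2$), and since $(\boldsymbol{y}_p,s)\mapsto\boldsymbol{y}_p+s\underline{\eta}$ is a linear isometry carrying $D$ onto $\Omega_{\underline{\eta}}$, the H\"older continuity of $F$ transfers to $f|_{\partial U_{\underline{\eta}}}$. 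I would also remark that $f$ being $\mathbb{R}_{p+q}$-valued (rather than $\mathbb{R}_{p+1}$-valued) is harmless: the Plemelj--Sokhotski formula holds verbatim for densities valued in any Clifford left module over the copy of $\mathbb{R}_{p+1}$ generated by $\{e_1,\dots,e_p,\underline{\eta}\}$, and monogenicity of $f$ plays no role, the formula concerning only the boundary behaviour of the Cauchy transform of a H\"older density.

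With these reductions in place I would simply invoke the classical Plemelj--Sokhotski formula of Clifford analysis over $\mathbb{R}^{p+2}$ (see, e.g., \cite{Br,De}): at a regular point $\boldsymbol{x}\in\partial U_{\underline{\eta}}$ the non-tangential limits of $\int_{\partial U_{\underline{\eta}}}E_{\boldsymbol{y}}(\boldsymbol{t})\,n(\boldsymbol{y})\,f(\boldsymbol{y})\,dS_{\underline{\eta}}(\boldsymbol{y})$ from $U_{\underline{\eta}}^{\pm}$ equal $\frac12\big[\pm f(\boldsymbol{x})+2\int_{\partial U_{\underline{\eta}}}E_{\boldsymbol{y}}(\boldsymbol{x})\,n(\boldsymbol{y})\,f(\boldsymbol{y})\,dS_{\underline{\eta}}(\boldsymbol{y})\big]$, the last integral being a Cauchy principal value; rewriting $E_{\boldsymbol{y}}=\mathcal{E}_{\boldsymbol{y}}$ on the slice (first step) yields exactly the claimed identity. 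I expect the main thing to get right to be bookkeeping rather than genuine analysis: checking that the ``rotated'' set $\{e_1,\dots,e_p,\underline{\eta}\}$ really satisfies the Clifford relations of the generators of $\mathbb{R}_{p+1}$ (so that the $(p+2)$-dimensional analysis on the slice, and the stated shape of $n(\boldsymbol{y})$, are the classical ones), and matching the non-tangential-limit convention together with the choice $U_{\underline{\eta}}^-=\Omega_{\underline{\eta}}\setminus\overline{U_{\underline{\eta}}}$ (rather than a full exterior domain) with the hypotheses of the classical theorem — which is harmless, the Plemelj formula being local around $\boldsymbol{x}$.
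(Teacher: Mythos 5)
Your proposal rests on the same core reduction as the paper's proof: transfer the H\"older continuity from the stem function to the boundary density on $\partial U_{\underline{\eta}}$, check that $\{e_1,\dots,e_p,\underline{\eta}\}$ generates a copy of $\mathbb{R}_{p+1}$ so that the classical Plemelj--Sokhotski formula over $\mathbb{R}^{p+2}$ applies on the slice, and handle the $\mathbb{R}_{p+q}$-valued target by right-module linearity (the paper does this explicitly via the Splitting Lemma of \cite{Xu}, applying the classical formula of \cite{Gu} to each component $F^j_A$ and reassembling). The genuine divergence is in the last step. You eliminate the generalized kernel at the outset by observing that $\mathcal{E}_{\boldsymbol{y}}(\boldsymbol{t})=E_{\boldsymbol{y}}(\boldsymbol{t})$ whenever $\boldsymbol{t}$ lies on the slice $\mathbb{R}^{p+1}\oplus\underline{\eta}\mathbb{R}$ (correct: $\underline{\omega}\,\underline{\eta}=\mp 1$ there), which settles the statement as literally written, with $\boldsymbol{x}\in\partial U_{\underline{\eta}}$ and $\boldsymbol{t}\in U_{\underline{\eta}}^{\pm}$ both on the slice. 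The paper instead first proves the limit identity at the two slice points $\boldsymbol{x}_p\pm r\underline{\eta}$ and then invokes the Representation Formula (Theorem \ref{*Representation Formula*}) to propagate it to $\boldsymbol{x}=\boldsymbol{x}_p+r\underline{\omega}$ for an arbitrary $\underline{\omega}\in\mathbb{S}$; that propagation is exactly how the combination $\tfrac12(1-\underline{\omega}\underline{\eta})E_{\boldsymbol{y}}(\boldsymbol{x}_p+r\underline{\eta})+\tfrac12(1+\underline{\omega}\underline{\eta})E_{\boldsymbol{y}}(\boldsymbol{x}_p-r\underline{\eta})$, i.e.\ $\mathcal{E}_{\boldsymbol{y}}(\boldsymbol{x})$, enters the conclusion. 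One caveat therefore: if, as the presence of $\mathcal{E}_{\boldsymbol{y}}$ rather than $E_{\boldsymbol{y}}$ and the structure of the paper's own argument suggest, the theorem is intended to hold for $\boldsymbol{x}$ ranging over the whole $p$-symmetrized boundary set $[\boldsymbol{x}]=\boldsymbol{x}_p+r\mathbb{S}$ and not only over $\partial U_{\underline{\eta}}$ itself, then your argument covers only the on-slice case (where the result collapses to the classical one) and you would need to append the Representation Formula step, applied both to $f$ and to the Cauchy-type integral as generalized partial-slice functions of $\boldsymbol{x}$, to pass from $\underline{\eta}$ to a general $\underline{\omega}$.
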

\begin{proof}
	Let $\boldsymbol{t}=\boldsymbol{t}_p+r^{\prime}\underline{\omega}\in {U_{\underline{\eta}}}^{\pm}$, $\boldsymbol{x}=\boldsymbol{x}_p+r\underline{\omega}\in \partial U_{\underline{\eta}}$, $\boldsymbol{x}^{\prime}=\left( \boldsymbol{x}_p,r \right) \in D$ and $\underline{\eta}\in \mathbb{S} $. Let 
	$$f\left( \boldsymbol{x} \right) =F_1\left( \boldsymbol{x}^{\prime} \right) +\underline{\omega}F_2\left( \boldsymbol{x}^{\prime} \right), $$
	where the stem function $F=F_1+iF_2$ is H\"older continuous. Indeed, According to the Splitting Lemma in \cite{Xu}, there exist H\"older continuous functions $F_{A}^{j}:D\longrightarrow \mathbb{R} _{p+1}\left( j=1,2 \right) $ such that 
	\begin{align*}
		F_j=\sum_{A=\left\{ i_1,\cdots ,i_s \right\} \subset \left\{ p+2,\cdots ,p+q \right\}}{F_{A}^{j}e_A},\ j=1,2.
	\end{align*}
	where $e_A=e_{i_1}\cdots e_{i_s}$, $A=\left\{ i_1,\cdots ,i_s \right\} \subset \left\{ p+2,\cdots ,p+q \right\}$ with $i_1<\cdots <i_s$, and $I_{\emptyset}=1$ when $A=\emptyset$.
	\par
	The Plemelj-Sokhotski formula for H\"older continuous functions in \cite{Gu} gives that 
	\begin{align*}
		&n.t.\-\lim_{\boldsymbol{t}\rightarrow \boldsymbol{x}} \int_{\partial U_{\underline{\eta }}}{E_{\boldsymbol{y}}\left( \boldsymbol{t}_p+r^{\prime}\underline{\eta } \right)}n\left( \boldsymbol{y} \right) F_{A}^{j}\left( \boldsymbol{y} \right) dS_{\underline{\eta}} \left( \boldsymbol{y} \right) \\&=\frac{1}{2}\left[ \pm F_{A}^{j}\left( \boldsymbol{x}_p+r\underline{\eta } \right) +2\int_{\partial U_{\underline{\eta }}}{E_{\boldsymbol{y}}\left( \boldsymbol{x}_p+r\underline{\eta } \right)}n\left( \boldsymbol{y} \right) F_{A}^{j}\left( \boldsymbol{y} \right) dS_{\underline{\eta}} \left( \boldsymbol{y} \right) \right],
	\end{align*}
	where the variable $\boldsymbol{y}=\boldsymbol{y}_p+\tilde{r}\underline{\eta}\in \partial U_{\underline{\eta}}$ should be interpreted as $\left( \boldsymbol{y}_p,\tilde{r} \right) \in \mathbb{R} ^{p+2}$. Hence, we have
	\begin{align*}
		&n.t.\-\lim_{\boldsymbol{t}\rightarrow \boldsymbol{x}}  \int_{\partial U_{\underline{\eta }}}{E_{\boldsymbol{y}}\left( \boldsymbol{t}_p+r^{\prime}\underline{\eta } \right)}n\left( \boldsymbol{y} \right) F\left( \boldsymbol{y} \right) dS_{\underline{\eta}} \left( \boldsymbol{y} \right) \\&=\frac{1}{2}\left[ \pm F\left( \boldsymbol{x}_p+r\underline{\eta } \right) +2\int_{\partial U_{\underline{\eta }}}{E_{\boldsymbol{y}}\left( \boldsymbol{x}_p+r\underline{\eta } \right)}n\left( \boldsymbol{y} \right) F\left( \boldsymbol{y} \right) dS_{\underline{\eta}} \left( \boldsymbol{y} \right) \right] .
	\end{align*}
	Substituting $\underline{\eta}$ for $i$ in the stem function $F$ within the aforementioned formula, we obtain
	\begin{align*}
		&n.t.\-\lim_{\boldsymbol{t}\rightarrow \boldsymbol{x}}  \int_{\partial U_{\underline{\eta }}}{E_{\boldsymbol{y}}\left( \boldsymbol{t}_p+r^{\prime}\underline{\eta } \right)}n\left( \boldsymbol{y} \right) f\left( \boldsymbol{y} \right) dS_{\underline{\eta}} \left( \boldsymbol{y} \right) \\&=\frac{1}{2}\left[ \pm f\left( \boldsymbol{x}_p+r\underline{\eta } \right) +2\int_{\partial U_{\underline{\eta }}}{E_{\boldsymbol{y}}\left( \boldsymbol{x}_p+r\underline{\eta } \right)}n\left( \boldsymbol{y} \right) f\left( \boldsymbol{y} \right) dS_{\underline{\eta}} \left( \boldsymbol{y} \right) \right] ,
	\end{align*}
	in other words,
	\begin{align*}
		f\left( \boldsymbol{x}_p+r\underline{\eta } \right) =&\pm 2\bigg[ n.t.\-\lim_{\boldsymbol{t}\rightarrow \boldsymbol{x}}  \int_{\partial U_{\underline{\eta }}}{E_{\boldsymbol{y}}\left( \boldsymbol{t}_p+r^{\prime}\underline{\eta } \right)}n\left( \boldsymbol{y} \right) f\left( \boldsymbol{y} \right) dS_{\underline{\eta}} \left( \boldsymbol{y} \right)\\
		&-\int_{\partial U_{\underline{\eta }}}{E_{\boldsymbol{y}}\left( \boldsymbol{x}_p+r\underline{\eta } \right)}n\left( \boldsymbol{y} \right) f\left( \boldsymbol{y} \right) dS_{\underline{\eta}} \left( \boldsymbol{y} \right) \bigg].
	\end{align*}
	Moreover, the formula remains valid when the variable $\boldsymbol{x}_p + r\underline{\eta}$ is substituted with $\boldsymbol{x}_p - r\underline{\eta}$. Applying the Representation formula stated in Theorem \ref{*Representation Formula*}, we then deduce that 
	\begin{align*}
		f\left( \boldsymbol{x}_p+r\underline{\omega} \right) =\frac{1}{2}\left( 1-\underline{\omega}\underline{\eta} \right) f\left( \boldsymbol{x}_p+r\underline{\eta} \right) +\frac{1}{2}\left( 1+\underline{\omega}\underline{\eta} \right) f\left( \boldsymbol{x}_p-r\underline{\eta} \right),
	\end{align*}
	from which we obtain the conclusion 
	\begin{align*}
		f\left( \boldsymbol{x} \right) =&\pm 2\bigg[ n.t.\-\lim_{\boldsymbol{t}\rightarrow \boldsymbol{x}}  \int_{\partial U_{\underline{\eta }}}{E_{\boldsymbol{y}}\left( \boldsymbol{t} \right)}n\left( \boldsymbol{y} \right) f\left( \boldsymbol{y} \right) dS_{\underline{\eta}} \left( \boldsymbol{y} \right) \\
		&-\int_{\partial U_{\underline{\eta }}}{E_{\boldsymbol{y}}\left( \boldsymbol{x} \right)}n\left( \boldsymbol{y} \right) f\left( \boldsymbol{y} \right) dS_{\underline{\eta}} \left( \boldsymbol{y} \right) \bigg] .
	\end{align*}
	Hence, we immediately have
	\begin{align*}
		&n.t.\-\lim_{\boldsymbol{t}\rightarrow \boldsymbol{x}} \int_{\partial U_{\underline{\eta}}}{\mathcal{E} _{\boldsymbol{y}}\left( \boldsymbol{t} \right) n\left( \boldsymbol{y} \right) f\left( \boldsymbol{y} \right) dS_{\underline{\eta}} \left( \boldsymbol{y} \right)}\\
		=&\frac{1}{2}\left[ \pm f\left( \boldsymbol{x} \right) +2\int_{\partial U_{\underline{\eta}}}{\mathcal{E} _{\boldsymbol{y}}\left( \boldsymbol{x} \right) n\left( \boldsymbol{y} \right) f\left( \boldsymbol{y} \right) dS_{\underline{\eta}} \left( \boldsymbol{y} \right)} \right],
	\end{align*}
	which completes the proof.
\end{proof}

%%%%%%%%%%%%%%       Properties of Teodorescu operator       %%%%%%%%%%%%%%
\section{Norm estimates of the Teodorescu transform}
In \cite{Xu}, a global non-constant coefficients differential operator for $C^1$ function $f:\Omega \longrightarrow \mathbb{R}_{p+q}$ is given by
\begin{align*}
	\bar{\vartheta}f\left( \boldsymbol{x} \right) =D_{\boldsymbol{x}_p}f\left( \boldsymbol{x} \right) +\frac{\underline{\boldsymbol{x}}_q}{\left| \underline{\boldsymbol{x}}_q \right|^2}\mathbb{E} _{\underline{\boldsymbol{x}}_q}f\left( \boldsymbol{x} \right).
\end{align*}
The differential operator necessitates a more cautious approach due to the singularities it introduces for the $\left| \underline{\boldsymbol{x}}_q \right|^2$ term within the operator. Therefore, we will adopt the notation $\mathbb{R} _{*}^{p+q+1}:=\mathbb{R} ^{p+q+1}\backslash \mathbb{R} ^{p+1}$ for the remainder of this article.
\par
The Cauchy kernel for generalized partial-slice monogenic functions is defined as follows 
\begin{align*}
	K_{\boldsymbol{y}}\left( \boldsymbol{x} \right) =\frac{\mathcal{E} _{\boldsymbol{y}}\left( \boldsymbol{x} \right)}{\sigma _{q-1}\vert \underline{\boldsymbol{y}}_q \vert^{q-1}}
\end{align*}
where $\sigma_{q-1}$ is the area of the $\left( q-1 \right) \text{-}$sphere $\mathbb{S}$.
\par
The Cauchy-Pompeiu formula we mentioned earlier is only applicable to $\Omega _{\underline{\eta}}$. However, by the approach applied in \cite[Theorem 3.5, 3.6]{Di}, we can easily derive the Cauchy-Pompeiu formula and the Cauchy integral formula on the domain $\Omega _D$ as follows.
\begin{theorem}[Cauchy-Pompeiu formula]\label{CPFG}Let $\Omega _D\subset \mathbb{R} _{*}^{p+q+1}$ be a bounded domain as previously defined, and generalized partial-slice function $f\in ker\bar{\vartheta}$. If $U$ is a domain in $\mathbb{R}^{p+q+1}$ such that $U_D\subset \Omega _D$ is a bounded domain in $\mathbb{R}^{p+2}$ with smooth boundary $\partial U_D\subset \Omega _D$, then for any $\boldsymbol{x}\in U$, we have
	\begin{align*}
		f\left( \boldsymbol{x} \right) =\int_{\partial U_D}{K_{\boldsymbol{y}}\left( \boldsymbol{x} \right) n\left( \boldsymbol{y} \right) f\left( \boldsymbol{y} \right) dS\left( \boldsymbol{y} \right)}-\int_{U_D}{K_{\boldsymbol{y}}\left( \boldsymbol{x} \right) \left( \bar{\vartheta}f \right) \left( \boldsymbol{y} \right) d\sigma \left( \boldsymbol{y} \right)},
	\end{align*}
	where $n\left( \boldsymbol{y} \right)$ is the unit exterior normal vector to $\partial U_D$ at $\boldsymbol{y}$, $dS$ and $d\sigma$  stand for the classical Lebesgue surface element and volume element in $\mathbb{R}^{p+2}$, respectively.
\end{theorem}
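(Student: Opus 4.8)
The plan is to reduce the statement to a single $(p+2)$-dimensional slice, where it is the classical Cauchy--Pompeiu formula, and then to propagate the resulting identity across the whole $p$-symmetric domain $\Omega_D$ by means of the Representation Formula of Theorem~\ref{*Representation Formula*}. First I would fix a direction $\underline{\eta}\in\mathbb{S}$ and, for an arbitrary $\boldsymbol{x}=\boldsymbol{x}_p+r\underline{\omega}\in U$, record two facts. On the slice $\Omega_{\underline{\eta}}$ one has $\underline{\boldsymbol{x}}_q=s\underline{\eta}$, so $\frac{\underline{\boldsymbol{x}}_q}{|\underline{\boldsymbol{x}}_q|^{2}}\mathbb{E}_{\underline{\boldsymbol{x}}_q}=\underline{\eta}\,\partial_s$ (it is exactly here that the standing hypothesis $\Omega_D\subset\mathbb{R}_{*}^{p+q+1}$ is used, so that $|\underline{\boldsymbol{x}}_q|^{-2}$ is meaningful), hence $\bar{\vartheta}$ restricted to $\Omega_{\underline{\eta}}$ is the generalized Cauchy--Riemann operator $D_{\boldsymbol{x}_p}+\underline{\eta}\,\partial_s$ of $\mathbb{R}^{p+2}$ under the identification $\underline{\eta}\leftrightarrow e_{p+1}$; consequently $f|_{\Omega_{\underline{\eta}}}\in\ker\bar{\vartheta}$ is monogenic on $\Omega_{\underline{\eta}}\subset\mathbb{R}^{p+2}$. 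Also, because $U$ is $p$-symmetric, its slice domain $U_D\subset\mathbb{R}^{p+2}$ is invariant under $s\mapsto -s$, so both points $\boldsymbol{x}_p+r\underline{\eta}$ and $\boldsymbol{x}_p-r\underline{\eta}$ lie in $U_{\underline{\eta}}\subset\Omega_{\underline{\eta}}$. Therefore the Cauchy--Pompeiu formula already available on $\Omega_{\underline{\eta}}$ (equivalently, the classical Cauchy--Pompeiu formula of $\mathbb{R}^{p+2}$, with the normalization defining $K_{\boldsymbol{y}}$) applies at the two points $z=\boldsymbol{x}_p\pm r\underline{\eta}$, giving
\[
 f(z)=\int_{\partial U_D}K_{\boldsymbol{y}}(z)\,n(\boldsymbol{y})f(\boldsymbol{y})\,dS(\boldsymbol{y})-\int_{U_D}K_{\boldsymbol{y}}(z)\,(\bar{\vartheta}f)(\boldsymbol{y})\,d\sigma(\boldsymbol{y}).
\]

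Next I would feed these two identities into the Representation Formula in the form $f(\boldsymbol{x})=\tfrac12(1-\underline{\omega}\underline{\eta})f(\boldsymbol{x}_p+r\underline{\eta})+\tfrac12(1+\underline{\omega}\underline{\eta})f(\boldsymbol{x}_p-r\underline{\eta})$. Each coefficient $\tfrac12(1\mp\underline{\omega}\underline{\eta})$ multiplies its integral from the left, and $K_{\boldsymbol{y}}(z)$ is the leftmost factor of the integrand, so associativity of Clifford multiplication and linearity of the integral let me pull the coefficients inside against the kernels and merge the two boundary integrals (and, separately, the two volume integrals) into one. Since $E_{\boldsymbol{y}}(\boldsymbol{x}_p\pm r\underline{\eta})=\mathcal{E}_{\boldsymbol{y}}(\boldsymbol{x}_p\pm r\underline{\eta})$, the very definition of $\mathcal{E}_{\boldsymbol{y}}$ yields
\[
 \tfrac12(1-\underline{\omega}\underline{\eta})K_{\boldsymbol{y}}(\boldsymbol{x}_p+r\underline{\eta})+\tfrac12(1+\underline{\omega}\underline{\eta})K_{\boldsymbol{y}}(\boldsymbol{x}_p-r\underline{\eta})=K_{\boldsymbol{y}}(\boldsymbol{x}),
\]
so the merged integrals are precisely the right-hand side of the claimed formula; the Cauchy integral formula on $\Omega_D$ then follows as the special case $\bar{\vartheta}f\equiv 0$.

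I expect the argument to be mostly careful bookkeeping once the two ingredients (the slice Cauchy--Pompeiu formula and the Representation Formula) are in hand. The points that actually require attention are: justifying the collapse $\bar{\vartheta}|_{\Omega_{\underline{\eta}}}=D_{\boldsymbol{x}_p}+\underline{\eta}\,\partial_s$, which is what forces the hypothesis $\Omega_D\subset\mathbb{R}_{*}^{p+q+1}$; checking that both evaluation points $\boldsymbol{x}_p\pm r\underline{\eta}$ land in the slice domain $U_D$, which is exactly the reflection-invariance built into $p$-symmetry; and, if the slice formula one quotes is phrased in terms of $E_{\boldsymbol{y}}$ or $\mathcal{E}_{\boldsymbol{y}}$ rather than $K_{\boldsymbol{y}}$, reconciling the normalizing factor $1/(\sigma_{q-1}|\underline{\boldsymbol{y}}_q|^{q-1})$ with the choice of Lebesgue measures $dS$, $d\sigma$ on $\partial U_D$, $U_D\subset\mathbb{R}^{p+2}$ — a routine but attentive tracking of a multiplicative constant through polar coordinates in the $\mathbb{R}^q$ variable. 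The one genuinely noncommutative step, and the main thing to get right, is the recombination of the two kernels into $\mathcal{E}_{\boldsymbol{y}}$ in the second paragraph: it goes through precisely because the Representation-Formula coefficients act on the left, and this is also the structural reason why $\mathcal{E}_{\boldsymbol{y}}$ (hence $K_{\boldsymbol{y}}$) was defined the way it was.
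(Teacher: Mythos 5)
Your proposal is correct and is essentially the argument the paper intends: the paper gives no written proof of Theorem~\ref{CPFG}, deferring to the slice-restriction-plus-Representation-Formula method of the cited reference, which is exactly what you carry out (and which mirrors the paper's own proof of the Plemelj--Sokhotski formula, Theorem~\ref{PSF}: apply the classical result at $\boldsymbol{x}_p\pm r\underline{\eta}$ on the slice, then recombine via Theorem~\ref{*Representation Formula*}). Your closing remark about reconciling the factor $1/(\sigma_{q-1}\vert\underline{\boldsymbol{y}}_q\vert^{q-1})$ in $K_{\boldsymbol{y}}$ with the stated $\mathbb{R}^{p+2}$ Lebesgue measures is well taken -- that normalization is the one genuine point of friction in the theorem as stated -- but it is an issue with the paper's bookkeeping, not with your argument.
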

\begin{theorem}[Cauchy integral formula]Let $\Omega _D\subset \mathbb{R} _{*}^{p+q+1}$ be a bounded domain as previously defined, and slice function $f\in ker\bar{\vartheta}$. Then, for any $\boldsymbol{x}\in \Omega _D$, we have 
\begin{align}\label{CIF}
	\int_{\partial \Omega _D}{K_{\boldsymbol{y}}\left( \boldsymbol{x} \right) n\left( \boldsymbol{y} \right) f\left( \boldsymbol{y} \right) d\sigma \left( \boldsymbol{y} \right)}=f\left( \boldsymbol{x} \right),
\end{align}
where $n\left( \boldsymbol{y} \right)$ represents the unit normal vector pointing outwards from the boundary $\partial \Omega _D$ at each point $\boldsymbol{y}$.
\end{theorem}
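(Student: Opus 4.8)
The plan is to obtain this formula as an immediate consequence of the Cauchy--Pompeiu formula established in Theorem~\ref{CPFG}. The key observation is that the hypothesis $f\in\ker\bar{\vartheta}$ says exactly that $\bar{\vartheta}f\equiv 0$ on $\Omega_D$, so in the Cauchy--Pompeiu identity the solid integral $\int_{U_D}K_{\boldsymbol{y}}(\boldsymbol{x})(\bar{\vartheta}f)(\boldsymbol{y})\,d\sigma(\boldsymbol{y})$ vanishes identically and only the surface term survives. Hence for any smoothly bounded $p$-symmetric subdomain $U$ with $\overline{U_D}\subset\Omega_D$ and any $\boldsymbol{x}\in U$ one already gets $f(\boldsymbol{x})=\int_{\partial U_D}K_{\boldsymbol{y}}(\boldsymbol{x})\,n(\boldsymbol{y})\,f(\boldsymbol{y})\,dS(\boldsymbol{y})$. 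The remaining task is to let $U$ swell up to $\Omega_D$ and transfer this identity to $\partial\Omega_D$.

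Concretely, I would fix $\boldsymbol{x}\in\Omega_D$ and choose an exhaustion $\overline{U_D^{(k)}}\nearrow\Omega_D$ by smoothly bounded $p$-symmetric subdomains with $\boldsymbol{x}\in U_D^{(k)}$ for all large $k$; since $\Omega_D$ is the $p$-symmetric completion of a domain $D\subset\mathbb{R}^{p+2}$, this can be realised at the level of $D$ via sublevel sets of a defining function of $\partial D$ at regular values, and then completed $p$-symmetrically. Applying Theorem~\ref{CPFG} on each $U^{(k)}$ and discarding the vanishing volume term gives $f(\boldsymbol{x})=\int_{\partial U_D^{(k)}}K_{\boldsymbol{y}}(\boldsymbol{x})\,n(\boldsymbol{y})\,f(\boldsymbol{y})\,dS(\boldsymbol{y})$ for all large $k$, and it remains to show that the right-hand side converges to $\int_{\partial\Omega_D}K_{\boldsymbol{y}}(\boldsymbol{x})\,n(\boldsymbol{y})\,f(\boldsymbol{y})\,dS(\boldsymbol{y})$ as $k\to\infty$. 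If one instead assumes at the outset that $\partial\Omega_D$ is smooth and $f$ extends $C^1$ to $\overline{\Omega_D}$ (or is regular on a neighbourhood of it), Theorem~\ref{CPFG} applies verbatim with $U=\Omega$ and this exhaustion step is unnecessary.

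The only technical issue, and the step I would treat most carefully, is the integrability of the kernel near $\partial\Omega_D$ and the convergence of the approximating boundary integrals. Because $\boldsymbol{x}$ is an interior point, the singularity of $\boldsymbol{y}\mapsto K_{\boldsymbol{y}}(\boldsymbol{x})$ at $\boldsymbol{y}=\boldsymbol{x}$ plays no role on a neighbourhood of $\partial\Omega_D$; however $K_{\boldsymbol{y}}(\boldsymbol{x})=\mathcal{E}_{\boldsymbol{y}}(\boldsymbol{x})\big/(\sigma_{q-1}|\underline{\boldsymbol{y}}_q|^{q-1})$ still carries the factor $|\underline{\boldsymbol{y}}_q|^{-(q-1)}$, which blows up as $\boldsymbol{y}\to\mathbb{R}^{p+1}$. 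Here the standing assumption $\Omega_D\subset\mathbb{R}_*^{p+q+1}$, understood so that $\overline{\Omega_D}$ is a compact subset of $\mathbb{R}_*^{p+q+1}$ disjoint from $\mathbb{R}^{p+1}$, keeps $|\underline{\boldsymbol{y}}_q|$ bounded below on $\overline{\Omega_D}$, so $\boldsymbol{y}\mapsto K_{\boldsymbol{y}}(\boldsymbol{x})$ is bounded and continuous there; combined with the continuity of $f$ up to the boundary and a choice of subdomains for which the associated surface integrals converge to the one over $\partial\Omega_D$, the passage to the limit goes through. Finally, no separate slice-consistency argument is needed: the representation formula of Theorem~\ref{*Representation Formula*} is already built into the definition of $\mathcal{E}_{\boldsymbol{y}}$, and hence of $K_{\boldsymbol{y}}$, so the identity holds for all $\underline{\omega}\in\mathbb{S}$ once it holds on a single slice.
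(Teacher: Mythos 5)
Your derivation is correct and coincides with the route the paper intends: the theorem is presented as an immediate consequence of the Cauchy--Pompeiu formula of Theorem~\ref{CPFG}, the volume term dropping out because $\bar{\vartheta}f=0$ on $\Omega_D$, and the paper gives no further argument beyond citing the approach of \cite{Di}. Your added care about exhausting $\Omega_D$ by subdomains and about the lower bound on $\vert\underline{\boldsymbol{y}}_q\vert$ near $\partial\Omega_D$ only makes explicit what the paper leaves implicit.
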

\par
If we denote 
\begin{align*}
	&T_{\Omega _D}f\left( \boldsymbol{x} \right) =-\int_{\Omega _D}{K_{\boldsymbol{y}}\left( \boldsymbol{x} \right) f\left( \boldsymbol{y} \right) d\sigma \left( \boldsymbol{y} \right)},\\
	&F_{\partial \Omega _D}f\left( \boldsymbol{x} \right) =\int_{\partial \Omega _D}{K_{\boldsymbol{y}}\left( \boldsymbol{x} \right) n\left( \boldsymbol{y} \right) f\left( \boldsymbol{y} \right) dS\left( \boldsymbol{y} \right)},
\end{align*}
then the equation in Cauchy-Pompeiu formula in Theorem \ref{CPFG} can be rewritten as 
\begin{align*}
	F_{\partial \Omega _D}f\left( \boldsymbol{x} \right) +T_{\Omega _D}\left( \bar{\vartheta}f \right) \left( \boldsymbol{x} \right) =f\left( \boldsymbol{x} \right) ,
\end{align*}for $\boldsymbol{x}\in \Omega _D$. Here, $T_{\Omega _D}$ is usually called the $\textit{Teodorescu transform}$. In particular, for functions with compact support in $\Omega_D$, we get $F_{\partial \Omega _D}f\left( \boldsymbol{x} \right) =0$. Hence, we have that 
\begin{align*}
	T_{\Omega _D}\left( \bar{\vartheta}f \right) \left( \boldsymbol{x} \right) =f\left( \boldsymbol{x} \right) ,
\end{align*}
which suggests that $T_{\Omega _D}$ is a left inverse of $\bar{\vartheta}$ when acts on functions with compact support. 
Now, we introduce the existence for $T_{\Omega_D}f$ and a norm estimate for $T_{\Omega_D}$ as follows.
\begin{proposition} \label{estimated}
	Suppose that $\Omega _D\subset \mathbb{R} _{*}^{p+q+1}$ is a bounded $p$-symmetric domain and $f\in L^t\left( \Omega _D \right) $. Then,
	\begin{enumerate}
		\item The integral $T_{\Omega _D}f\left( \boldsymbol{x} \right) $ exists everywhere in $\mathbb{R} _{*}^{p+q+1}$ when $t>q$;
		\item $\bar{\vartheta}T_{\Omega _D}f=0$ in $\mathbb{R} _{*}^{p+q+1}\backslash \overline{\Omega _D}$;
		\item Further, when $t>\max\{2p-1,2q-1\}$ and $q>1$, we have 
		\begin{align*}
			\left\| T_{\Omega _D}f \right\| _{L^t}\leqslant C\left( t,q,\Omega _D \right) \left\| f \right\| _{L^t}.
		\end{align*}
	\end{enumerate}
\end{proposition}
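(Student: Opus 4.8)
All three parts will be read off from the pointwise kernel bound
\begin{equation*}
\left|K_{\boldsymbol{y}}\left(\boldsymbol{x}\right)\right|\;\le\;\frac{C}{\left|\underline{\boldsymbol{y}}_q\right|^{q-1}\,\delta\left(\boldsymbol{x},\boldsymbol{y}\right)^{p+1}},\qquad \delta\left(\boldsymbol{x},\boldsymbol{y}\right)^{2}:=\left|\boldsymbol{y}_p-\boldsymbol{x}_p\right|^{2}+\bigl(\left|\underline{\boldsymbol{y}}_q\right|-\left|\underline{\boldsymbol{x}}_q\right|\bigr)^{2}=\operatorname{dist}\left(\boldsymbol{y},\left[\boldsymbol{x}\right]\right)^{2},
\end{equation*}
which one gets from $\left|E_{\boldsymbol{y}}\left(\boldsymbol{z}\right)\right|=\sigma_{p+1}^{-1}\left|\boldsymbol{y}-\boldsymbol{z}\right|^{-\left(p+1\right)}$, from $\left|1\pm\underline{\omega}\underline{\eta}\right|\le 2$, and from $\bigl(\left|\underline{\boldsymbol{y}}_q\right|+\left|\underline{\boldsymbol{x}}_q\right|\bigr)^{2}\ge\bigl(\left|\underline{\boldsymbol{y}}_q\right|-\left|\underline{\boldsymbol{x}}_q\right|\bigr)^{2}$, inserted term by term into $\mathcal{E}_{\boldsymbol{y}}$. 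I write points of $\Omega_D$ as $\boldsymbol{y}=\boldsymbol{y}_p+s\underline{\eta}$ with $s=\left|\underline{\boldsymbol{y}}_q\right|\ge 0$, $\underline{\eta}\in\mathbb{S}$, so that $d\sigma\left(\boldsymbol{y}\right)=s^{q-1}\,ds\,d\boldsymbol{y}_p\,d\Sigma\left(\underline{\eta}\right)$, denote by $D^{+}\subset\mathbb{R}^{p+2}$ the corresponding set of pairs $\left(\boldsymbol{y}_p,s\right)$, and put $t'=t/\left(t-1\right)$. For (1), fix $\boldsymbol{x}$ with $r=\left|\underline{\boldsymbol{x}}_q\right|>0$; H\"older's inequality gives $\left|T_{\Omega_D}f\left(\boldsymbol{x}\right)\right|\le\left\|f\right\|_{L^{t}}\left\|K_{\cdot}\left(\boldsymbol{x}\right)\right\|_{L^{t'}\left(\Omega_D\right)}$, and inserting the kernel bound together with the Jacobian $s^{q-1}$,
\begin{equation*}
\left\|K_{\cdot}\left(\boldsymbol{x}\right)\right\|_{L^{t'}\left(\Omega_D\right)}^{t'}\;\le\;C\,\sigma_{q-1}\int_{D^{+}}\frac{s^{\left(q-1\right)\left(1-t'\right)}}{\bigl(\left|\boldsymbol{y}_p-\boldsymbol{x}_p\right|^{2}+\left(s-r\right)^{2}\bigr)^{\left(p+1\right)t'/2}}\,ds\,d\boldsymbol{y}_p ,
\end{equation*}
the factor $s^{q-1}$ having absorbed all but the power $s^{\left(q-1\right)\left(1-t'\right)}$ of the weight. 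This residual power is integrable across $\left\{s=0\right\}$ exactly when $\left(q-1\right)\left(1-t'\right)>-1$, i.e.\ $t>q$, while away from $\left\{s=0\right\}$ the factor $\delta\left(\boldsymbol{x},\cdot\right)^{-\left(p+1\right)}$ is the scalar Cauchy kernel on $\mathbb{R}^{p+2}$ and is integrable against $L^{t}$ data by the classical estimates; this proves (1).

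For (2), take $\boldsymbol{x}\in\mathbb{R}_{*}^{p+q+1}\setminus\overline{\Omega_D}$. For every $\boldsymbol{y}\in\Omega_D$ the $p$-symmetry of $\Omega_D$ forces $\left[\boldsymbol{y}\right]\subset\Omega_D$, so $\boldsymbol{x}\notin\left[\boldsymbol{y}\right]$ and $\boldsymbol{x}\mapsto K_{\boldsymbol{y}}\left(\boldsymbol{x}\right)$ is real-analytic near $\boldsymbol{x}$; moreover, in the $\boldsymbol{x}$-variable it is a null solution of $\bar{\vartheta}$ there, as is evident from the Cauchy integral formula \eqref{CIF} (equivalently, from applying the Representation Formula to the monogenic kernel $E_{\boldsymbol{y}}$). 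On any compact subset of $\mathbb{R}_{*}^{p+q+1}\setminus\overline{\Omega_D}$ the kernel and its first $\boldsymbol{x}$-derivatives are bounded uniformly over $\boldsymbol{y}\in\Omega_D$, while $f\in L^{t}\subset L^{1}$; hence differentiation under the integral sign is legitimate and
\begin{equation*}
\bar{\vartheta}T_{\Omega_D}f\left(\boldsymbol{x}\right)=-\int_{\Omega_D}\bigl(\bar{\vartheta}_{\boldsymbol{x}}K_{\boldsymbol{y}}\bigr)\left(\boldsymbol{x}\right)f\left(\boldsymbol{y}\right)d\sigma\left(\boldsymbol{y}\right)=0 .
\end{equation*}

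Part (3) is the substantial one. One checks first that $\sup_{\boldsymbol{x}}\int_{\Omega_D}\left|K_{\boldsymbol{y}}\left(\boldsymbol{x}\right)\right|d\sigma\left(\boldsymbol{y}\right)<\infty$ (again $s^{q-1}$ absorbs the weight, and $\delta\left(\boldsymbol{x},\cdot\right)^{-\left(p+1\right)}\in L^{1}_{\mathrm{loc}}\left(\mathbb{R}^{p+2}\right)$), whereas $\int_{\Omega_D}\left|K_{\boldsymbol{y}}\left(\boldsymbol{x}\right)\right|d\sigma\left(\boldsymbol{x}\right)$ grows like $\left|\underline{\boldsymbol{y}}_q\right|^{-\left(q-1\right)}$ as $\boldsymbol{y}\to\mathbb{R}^{p+1}$, so the unweighted Schur test fails. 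The plan is to use Schur's test with the power weight $h\left(\boldsymbol{x}\right)=\left|\underline{\boldsymbol{x}}_q\right|^{\beta}$: it suffices to find an exponent $\beta$ and constants $A,B>0$ with
\begin{equation*}
\int_{\Omega_D}\left|K_{\boldsymbol{y}}\left(\boldsymbol{x}\right)\right|h\left(\boldsymbol{y}\right)^{t'}d\sigma\left(\boldsymbol{y}\right)\le A\,h\left(\boldsymbol{x}\right)^{t'},\qquad \int_{\Omega_D}\left|K_{\boldsymbol{y}}\left(\boldsymbol{x}\right)\right|h\left(\boldsymbol{x}\right)^{t}d\sigma\left(\boldsymbol{x}\right)\le B\,h\left(\boldsymbol{y}\right)^{t},
\end{equation*}
after which $\left\|T_{\Omega_D}\right\|_{L^{t}\to L^{t}}\le A^{1/t'}B^{1/t}=:C\left(t,q,\Omega_D\right)$. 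Cancelling the Jacobian as before, the two inequalities reduce to weighted bounds of the shape $\int_{D^{+}}\left(u_{p+2}\right)^{a}\left|u-v\right|^{-\left(p+1\right)}\,du\lesssim\left(v_{p+2}\right)^{a}$ on $\mathbb{R}^{p+2}$, with $a=\beta t'$ in the first case and $a=\beta t+q-1$ in the second; one verifies them by splitting $D^{+}$ into the ball $\bigl\{\left|u-v\right|<\delta_{0}\bigr\}$, where $\left|u-v\right|^{-\left(p+1\right)}$ is integrable, and its complement, where it is bounded, combined with the elementary estimate $\int_{\left|\boldsymbol{y}_p-\boldsymbol{x}_p\right|<\delta_{0}}\bigl(\left|\boldsymbol{y}_p-\boldsymbol{x}_p\right|^{2}+c^{2}\bigr)^{-\left(p+1\right)/2}\,d\boldsymbol{y}_p\lesssim 1+\left|\log c\right|$. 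The two requirements confine $\beta$ to an interval which is nonempty under the hypotheses $t>\max\left\{2p-1,2q-1\right\}$ and $q>1$, and any admissible $\beta$ then delivers the asserted estimate.

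The step that needs genuine care is (3): the kernel carries two unrelated singularities---the classical Cauchy singularity, which is not concentrated at a point but spread over the whole $\left(q-1\right)$-sphere $\left[\boldsymbol{x}\right]$, and the weight singularity $\left|\underline{\boldsymbol{y}}_q\right|^{-\left(q-1\right)}$ along $\mathbb{R}^{p+1}$---so one must produce a single power weight that tames both at once, the most delicate regime being the one in which $\boldsymbol{x}$ and $\boldsymbol{y}$ are simultaneously close to $\mathbb{R}^{p+1}$ and to each other.
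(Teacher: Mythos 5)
Your parts (1) and (2) follow essentially the paper's own route: H\"older's inequality plus a slice-by-slice singularity analysis of the kernel for (1), and differentiation under the integral sign together with $\bar{\vartheta}_{\boldsymbol{x}}\mathcal{E}_{\boldsymbol{y}}(\boldsymbol{x})=0$ for (2). Part (3) is where you genuinely diverge. The paper bounds $\int_{\Omega_D}\vert K_{\boldsymbol{y}}(\boldsymbol{x})\vert^{t'}d\sigma(\boldsymbol{y})$ pointwise in $\boldsymbol{x}$ by the one-dimensional integral $\int_0^{r_1}l^{(1-s)(p+1)}\vert r-l\vert^{(1-s)(q-1)}dl$ with $r=\vert\underline{\boldsymbol{x}}_q\vert$, and then shows this function of $r$ lies in $L^t$; the hypothesis $t>\max\{2p-1,2q-1\}$ enters only because the product is separated via $ab\le\tfrac12(a^2+b^2)$, which doubles both exponents. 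Your weighted Schur test with $h(\boldsymbol{x})=\vert\underline{\boldsymbol{x}}_q\vert^{\beta}$ avoids that splitting: after cancelling the Jacobian, the two Schur conditions reduce, as you say, to $\int_{D^+}u_{p+2}^{\,a}\vert u-v\vert^{-(p+1)}du\lesssim v_{p+2}^{\,a}$ with $a=\beta t'$ and $a=\beta t+q-1$, and each of these holds precisely for $-1<a\le 0$ (for $a>0$ the left-hand side does not vanish as $v_{p+2}\to 0$ while the right-hand side does). The admissible set is therefore $\beta\in\bigl(-\tfrac{1}{t'},0\bigr]\cap\bigl(-\tfrac{q}{t},\tfrac{1-q}{t}\bigr]$, which is nonempty exactly when $t>q$; in particular $\beta=(1-q)/t$ works. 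So your argument, carried to completion, is not only valid under the stated hypotheses but actually yields the $L^t$ bound in the larger range $t>q$, making the conditions $t>\max\{2p-1,2q-1\}$ and $q>1$ superfluous. This is a cleaner and strictly stronger route than the paper's.

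One caveat, which you share with the paper rather than introduce. In part (1) you dismiss the Cauchy-type singularity along the sphere $[\boldsymbol{x}]$ by appeal to ``classical estimates,'' but the H\"older step you set up requires $K_{\cdot}(\boldsymbol{x})\in L^{t'}$ near $[\boldsymbol{x}]$, i.e.\ local integrability of $\delta(\boldsymbol{x},\cdot)^{-(p+1)t'}$ on the slice $\mathbb{R}^{p+2}$, which forces $(p+1)t'<p+2$, that is $t>p+2$ --- a condition not implied by $t>q$. The paper's displayed integral $\int_0^{\epsilon_0}\rho^{\,p+1-s(p+1)}d\rho$ converges under exactly the same restriction, not under $t>q$ as asserted there, so the defect is inherited. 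If everywhere-existence is wanted for all $t>q$, a genuinely different treatment of the neighborhood of $[\boldsymbol{x}]$ is needed; neither your sketch nor the paper's proof covers the range $q<t\le p+2$.
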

\begin{proof}
	In the proof below, we consistently use the letter $C$ to represent various finite constants. Firstly, we observe that 
	\begin{align*}
	    \left| T_{\Omega _D}f\left( \boldsymbol{x} \right) \right|&=\left| -\int_{\Omega _D}{K_{\boldsymbol{y}}\left( \boldsymbol{x} \right) f\left( \boldsymbol{y} \right) d\sigma \left( \boldsymbol{y} \right)} \right|\\
		&\leqslant \int_{\Omega _D}{\left| K_{\boldsymbol{y}}\left( \boldsymbol{x} \right) f\left( \boldsymbol{y} \right) \right|d\sigma \left( \boldsymbol{y} \right)}\\
		&\leqslant \left( \int_{\Omega _D}{\left| K_{\boldsymbol{y}}\left( \boldsymbol{x} \right) \right|^s d\sigma \left( \boldsymbol{y} \right)} \right) ^{\frac{1}{s}}\left\| f \right\| _{L^t},  
	\end{align*}
	where $\frac{1}{s}+\frac{1}{t}=1$, $s,t>1$. Now, we consider 
	\begin{align*}
		&\int_{\Omega _D}{\left| K_{\boldsymbol{y}}\left( \boldsymbol{x} \right) \right|^s d\sigma \left( \boldsymbol{y} \right)}\\
		=&C\int_{\mathbb{S} ^+}{\int_{\Omega _{\underline{\eta }}}{\left| \frac{\mathcal{E} _{\boldsymbol{y}}\left( \boldsymbol{x} \right)}{\vert \underline{\boldsymbol{y}}_q \vert^{q-1}} \right|}^s}\cdot\vert \underline{\boldsymbol{y}}_q \vert^{q-1} d\sigma _{\underline{\eta }}\left( \boldsymbol{y} \right) dS({\underline{\eta }}) \\
		=&C\int_{\mathbb{S} ^+}{\int_{\Omega _{\underline{\eta }}}{\left| \alpha E_{\boldsymbol{y}}\left( \boldsymbol{x}_p+\underline{\eta }r \right) +\beta E_{\boldsymbol{y}}\left( \boldsymbol{x}_p-\underline{\eta }r \right) \right|}^s}\\
		&\quad\quad\quad\quad\cdot \vert \underline{\boldsymbol{y}}_q \vert^{-\left( s-1 \right) \left( q-1 \right)}d\sigma _{\underline{\eta}}\left( \boldsymbol{y} \right) dS({\underline{\eta }})  \\
		\leqslant& C\int_{\mathbb{S} ^+}{\int_{\Omega _{\underline{\eta }}}{\left( \left| \boldsymbol{y}-\left( \boldsymbol{x}_p+\underline{\eta }r \right) \right|^{-s\left( p+1 \right)}+\left| \boldsymbol{y}-\left( \boldsymbol{x}_p-\underline{\eta }r \right) \right|^{-s\left( p+1 \right)} \right)}}\\
		&\quad\quad\quad\quad\cdot \vert \underline{\boldsymbol{y}}_q \vert^{-\left( s-1 \right) \left( q-1 \right)}d\sigma _{\underline{\eta}}\left( \boldsymbol{y} \right)dS({\underline{\eta }})  ,
	\end{align*}
	where $\alpha =\frac{1-\underline{\omega}\underline{\eta}}{2}$, $\beta =\frac{1+\underline{\omega}\underline{\eta}}{2}$, $\mathbb{S} ^+$ is the half unit sphere of $\mathbb{S}$.
	\par
	Next, we only need to verify that the integral 
	\begin{align}\label{Int}
		\int_{\mathbb{S} ^+}{\int_{\Omega _{\underline{\eta }}}{\left| \boldsymbol{y}-\left( \boldsymbol{x}_p+\underline{\eta }r \right) \right|^{-s\left( p+1 \right)}}}\cdot \vert \underline{\boldsymbol{y}}_q \vert^{-\left( s-1 \right) \left( q-1 \right)}d\sigma _{\underline{\eta }}\left( \boldsymbol{y} \right)dS({\underline{\eta }}) 
	\end{align}
	is finite, the argument for the other one is similar.
	\par
	We notice that $\Omega _D\subset \mathbb{R} _{*}^{p+q+1}$ is bounded, let $\boldsymbol{u}_p=(u_0,\cdots,u_p)$, and
	\begin{align*}
		E=\left\{ \boldsymbol{x}=\boldsymbol{u}_p+\underline{\eta }v:r_0<u_0,\cdots,u_p<R,0<v<M,\underline{\eta }\in \mathbb{S} \right\},
	\end{align*}
	then, there exist $r,R,M>0$ such that $\Omega _D\subset E$. Hence, we have 
	\begin{align*}
		\int_{\mathbb{S} ^+}{\int_{\Omega _{\underline{\eta }}}{\left| \boldsymbol{y}-\left( \boldsymbol{x}_p+\underline{\eta }r \right) \right|^{-s\left( p+1 \right)}}}\cdot \vert \underline{\boldsymbol{y}}_q \vert^{-\left( s-1 \right) \left( q-1 \right)}d\sigma _{\underline{\eta }}\left( \boldsymbol{y} \right) dS({\underline{\eta }})  \\
		\leqslant \int_{\mathbb{S} ^+}{\int_{E_{\underline{\eta }}}{\left| \boldsymbol{y}-\left( \boldsymbol{x}_p+\underline{\eta }r \right) \right|^{-s\left( p+1 \right)}}}\cdot \vert \underline{\boldsymbol{y}}_q \vert^{-\left( s-1 \right) \left( q-1 \right)}d\sigma _{\underline{\eta }}\left( \boldsymbol{y} \right) dS({\underline{\eta }}) .
	\end{align*}
	We can easily find that all $E_{\underline{\eta}}$ are the same with $\underline{\eta}\in \mathbb{S} $ up to a rotation. Hence, we only need to show that 
	\begin{eqnarray} \label{proposition 2}
		\int_{E_{\underline{\eta }}}{\left| \boldsymbol{y}-\left( \boldsymbol{x}_p+\underline{\eta }r \right) \right|^{-s\left( p+1 \right)}\cdot \vert \underline{\boldsymbol{y}}_q \vert^{-\left( s-1 \right) \left( q-1 \right)}d\sigma _{\underline{\eta }}\left( \boldsymbol{y} \right)}
	\end{eqnarray}
	is finite uniformly with respect to $\underline{\eta}$. The singularities of the integral above occur in the following two cases.
	\begin{enumerate}
		\item The singular point $\boldsymbol{y}=\boldsymbol{x}_p+\underline{\eta}r$,
		\item Points $\boldsymbol{x}$ with $\underline{\boldsymbol{x}}_q=0$.
	\end{enumerate}
	Let $B\left( \boldsymbol{x}_p+\underline{\eta}r,\epsilon _0 \right) \subset E_{\underline{\eta}}$ be a neighborhood of $\boldsymbol{x}_p+\underline{\eta}r$ with a sufficiently small $\epsilon _0$ and 
	\begin{align*}
		 E_{\epsilon _1}=\left\{ \boldsymbol{u}_p+\underline{\eta }v:r_0<u_0,\cdots,u_p<R,-\epsilon _1<v<\epsilon _1 \right\} .
	\end{align*}
	The finiteness of the integral $\left( \ref{proposition 2} \right)$ is equivalent the finiteness of 
	\begin{align*}
		\int_{B\left( \boldsymbol{x}_p+\underline{\eta }r,\epsilon _0 \right) \cup E_{\epsilon _1}}{\left| \boldsymbol{y}-\left( \boldsymbol{x}_p+\underline{\eta }r \right) \right|^{-s\left( p+1 \right)}\cdot\vert \underline{\boldsymbol{y}}_q \vert^{-\left( s-1 \right) \left( q-1 \right)}d\sigma _{\underline{\eta }}\left( \boldsymbol{y} \right)}.
	\end{align*}
	On the one hand, let $\boldsymbol{y}=s\zeta \, ,\, \zeta \in \mathbb{S}$, then we have 
	\begin{align*}
		&\int_{B\left( \boldsymbol{x}_p+\underline{\eta }r,\epsilon _0 \right)}{\left| \boldsymbol{y}-\left( \boldsymbol{x}_p+\underline{\eta }r \right) \right|^{-s\left( p+1 \right)}\cdot \vert \underline{\boldsymbol{y}}_q \vert^{-\left( s-1 \right) \left( q-1 \right)}d\sigma _{\underline{\eta }}\left( \boldsymbol{y} \right)}\\
		& \leqslant C\left( \boldsymbol{x},t,q,\Omega _D \right) \int_0^{\epsilon _0}{\int_{\mathbb{S}}{r^{p+1-s\left( p+1 \right)}dsdS\left( \zeta \right)}<\infty},
	\end{align*}
	with $t>q$. On the other hand, recall that  $\boy=\boldsymbol{u}_p+\underline{\eta }v\in E_{\epsilon _1}$, then we obtain 
	\begin{align*}
		&\int_{E_{\epsilon _1}}{\left| \boldsymbol{y}-\left( \boldsymbol{x}_p+\underline{\eta }r \right) \right|^{-s\left( p+1 \right)}\cdot \vert \underline{\boldsymbol{y}}_q \vert^{-\left( s-1 \right) \left( q-1 \right)}d\sigma _{\underline{\eta }}\left( \boldsymbol{y} \right)}\\
		&\leqslant C\left( \boldsymbol{x},t,q,\Omega _D \right) \int_{r_0}^R\cdots\int_{r_0}^R{\int_0^{\epsilon _1}{v^{-\left( q-1 \right) \left( s-1 \right)}du_0\cdots du_pdv}<\infty}.
	\end{align*}
	Therefore, we obtain $Tf(\bx)$ is finite for all $\bx\in\Omega_D$, which can lead to the fact that $T_{\Omega _D}f\left( \boldsymbol{x} \right) $ exists everywhere in $\mathbb{R} _{*}^{p+q+1}$. Further, as $T_{\Omega _D}f$ has no singular points in $\mathbb{R} _{*}^{p+q+1}\backslash \overline{\Omega _D}$ and $\bar{\vartheta}\mathcal{E} _{\boldsymbol{y}}\left( \boldsymbol{x} \right) =0$ is trivial, it follows that $\bar{\vartheta}T_{\Omega _D}f=0$ in $\mathbb{R} _{*}^{p+q+1}\backslash \overline{\Omega _D}$.
	\par
	It is worth pointing out that we can not obtain the third statement by the estimate for $|T_{\Omega_D}f(\boldsymbol{x})|$ obtained above, since the constant $C$ depends on $\boldsymbol{x}$. Here, we need a more subtle argument to deal with the singularities in \eqref{Int}. 
	\par
	We denote $\bx_{\underline{\eta }}=\boldsymbol{x}_p+r\underline{\eta }$, and let $r_1>0$ be sufficiently large such that $\Omega_D\subset B(\bx,r_1)$ for all $\bx\in\Omega_D$. Let $\boy=\bx_{\ueta}+l\underline{\gamma}=\boldsymbol{x}_p+r\underline{\eta }+l\underline{\gamma}$ with $\underline{\gamma}\in\mathbb{S}$, $B_{\ueta}(\bx_{\ueta},r_1):=B(\bx_{\ueta},r_1)\cap\mathbb{C}_{\ueta}$. Then, we can see that $\vert \underline{\boldsymbol{y}}_q \vert=|r+l\cos\theta|=|r+l\langle\ueta,\ugamma\rangle|$, where $\theta=\arccos\langle\ueta,\ugamma\rangle$ is the angle between $\ueta$ and $\ugamma$. Now, we have
	\begin{align*}
	&\int_{\mathbb{S} ^+}{\int_{\Omega _{\underline{\eta }}}{\vert \boldsymbol{y}-\bx_{\ueta} |^{-s\left( p+1 \right)}}}\cdot \vert \underline{\boldsymbol{y}}_q \vert^{-\left( s-1 \right) \left( q-1 \right)}d\sigma _{\underline{\eta }}\left( \boldsymbol{y} \right) dS({\underline{\eta }}) \\
	\leq&\int_{\mathbb{S} ^+}{\int_{B_{\ueta}(\bx_{\ueta},r_1)}{| \boldsymbol{y}-\bx_{\ueta} |^{-s\left( p+1 \right)}}}\cdot \vert \underline{\boldsymbol{y}}_q \vert^{-\left( s-1 \right) \left( q-1 \right)}d\sigma _{\underline{\eta }}\left( \boldsymbol{y} \right) dS({\underline{\eta }}) \\
	\leq&\int_{\mathbb{S} ^+}\int_{\mathbb{S} ^+}\int_0^{r_1}l^{-s(p+1)}|r+l\langle\ueta,\ugamma\rangle|^{-\left( s-1 \right) \left( q-1 \right)}l^{p+1}dldS(\uomega)dS(\ueta)\\
	\leq&C\int_0^{r_1}l^{(1-s)(p+1)}|r-l|^{(1-s)(q-1)}dl.
	\end{align*}
To estimate the $L^t$ norm of $T_{\Omega_D}$, we only need to show that the $L^t$ norm of \eqref{Int} is finite. Recall that $r=|\underline{\bx}_{q}|$, and with the argument above, we  calculate
\begin{align*}
&\int_{\Omega_D}\bigg\vert\int_0^{r_1}l^{(1-s)(p+1)}|r-l|^{(1-s)(q-1)}dl\bigg\vert^tdV(\bx)\\
\leq&\int_{\So^+}\int_{\Omega_{\uomega}}\bigg\vert\int_0^{r_1}l^{(1-s)(p+1)}|r-l|^{(1-s)(q-1)}dl\bigg\vert^tr^{q-1}dV_{\uomega}(\bx)dS(\uomega).
\end{align*}
Let $a,b>0$ such that $$\Omega_{\uomega}\subset\{\bx=\sum_{i=0}^pe_ix_i+r\uomega\in\mathbb{R}^{p+q+1}: -a<x_1,\ldots,x_p<a,0<r<b\}$$ for all $\uomega\in\mathbb{S}$. Therefore, the last integral above becomes
\begin{align}\label{Eva}
\leq&\int_{\So^+}\int_{-a}^a\cdots\int_{-a}^a\int_{0}^b\bigg\vert\int_0^{r_1}l^{(1-s)(p+1)}|r-l|^{(1-s)(q-1)}dl\bigg\vert^tr^{q-1}\nonumber\\
&\cdot dx_0\cdots dx_pdrdS(\uomega)\nonumber\\
\leq&C\int_{0}^b\bigg\vert\int_0^{r_1}l^{(1-s)(p+1)}|r-l|^{(1-s)(q-1)}dl\bigg\vert^tr^{q-1}dr,
\end{align}
where the last equation comes from the fact that $r=|\ubx_q|$ and $l$ are independent to $x_0,\ldots,x_p$. Now, let $r-l=\sigma$, we calculate
\begin{align*}
&\int_0^{r_1}l^{(1-s)(p+1)}|r-l|^{(1-s)(q-1)}dl\\
\leq&\frac{1}{2}\int_0^{r_1}l^{2(1-s)(p+1)}+|r-l|^{2(1-s)(q-1)}dl\\
\leq&C\bigg[\int_0^{r_1}l^{2(1-s)(p+1)}dl+\int^r_{r-r_1}|\sigma|^{2(1-s)(q-1)}dl\bigg]\\
\leq&C\bigg[\int_0^{r_1}l^{2(1-s)(p+1)}dl+\int^r_0\sigma^{2(1-s)(q-1)}dl-\int^0_{r-r_1}\sigma^{2(1-s)(q-1)}dl\\
\leq &C(r_1,t,p)+\frac{1}{2(1-s)(q-1)+1}\big(r^{2(1-s)(q-1)+1}+(r-r_1)^{2(1-s)(q-1)+1}\big),
\end{align*}
where the last inequality requires $2(1-s)(p+1)>-1$ and $2(1-s)(q-1)>-1$, which is $t>\max\{2p-1,2q-1\}$. Plugging back to \eqref{Eva} to obtain
\begin{align*}
&\int_{\Omega_D}\bigg\vert\int_0^{r_1}l^{(1-s)(p+1)}|r-l|^{(1-s)(q-1)}dl\bigg\vert^tdV(\bx)\\
\leq&C\int_0^b\bigg\vert C(r_1,t,p)+\frac{r^{2(1-s)(q-1)+1}+(r-r_1)^{2(1-s)(q-1)+1}}{2(1-s)(q-1)+1}\bigg\vert^t r^{q-1}dr\\
\leq&C(r_1,t,p)+C_1(r_1,t,p)\bigg[\int_0^br^{2(1-s)(q-1)t+t+q-1}dr\\
&+\int_0^b(r-r_1)^{2(1-s)(q-1)t+t}r^{q-1}dr\bigg],
\end{align*}
where the last integral is finite when $2(1-s)(q-1)t+t>-1$, which also implies that $2(1-s)(q-1)t+t+q-1>-1$. A straightforward calculation shows that this can be satisfied when $t>\max\{2p-1,2q-1\}$ and $q>1$, which completes the proof.
\end{proof}
Further, we claim that $T_{\Omega}$ also maps slice functions to slice functions as follows.
\begin{proposition}
	Let $\Omega _D\subset \mathbb{R} _{*}^{p+q+1}$ be a bounded domain and $t>q$, then we have that
	\begin{align*}
		T_{\Omega _D}\,:\,\mathcal{L} ^t\left( \Omega _D \right) \longrightarrow \mathcal{L} ^t\left( \Omega _D \right) 
	\end{align*}
	is continuous. 
\end{proposition}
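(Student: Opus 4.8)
The plan is to argue in two steps. First I would show that $T_{\Omega_D}$ sends a generalized partial-slice function to a generalized partial-slice function, so that the map into $\mathcal{L}^t(\Omega_D)$ makes sense; then I would prove boundedness for the $L^t$-norm. For the first step the key point is that, for each fixed $\by$, the Cauchy kernel $\mathcal{E}_{\by}(\cdot)$ — and hence $K_{\by}(\cdot)$ — is itself a left generalized partial-slice function of $\bx$ whose stem function can be read off its definition. Writing $\bx=\bx_p+r\uomega$ and expanding,
\begin{align*}
\mathcal{E}_{\by}(\bx)&=\tfrac12\bigl(E_{\by}(\bx_p+r\ueta)+E_{\by}(\bx_p-r\ueta)\bigr)+\uomega\cdot\tfrac12\ueta\bigl(E_{\by}(\bx_p-r\ueta)-E_{\by}(\bx_p+r\ueta)\bigr)\\
&=:G_1^{\by}(\bx_p,r)+\uomega\, G_2^{\by}(\bx_p,r),
\end{align*}
and since $r\mapsto E_{\by}(\bx_p\pm r\ueta)$ are interchanged by the reflection $r\mapsto -r$, the component $G_1^{\by}$ is even and $G_2^{\by}$ is odd in $r$, i.e. $G_1^{\by}+iG_2^{\by}$ satisfies the stem conditions \eqref{Stem Function Costituent}. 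Dividing by the $\bx$-independent factor $\sigma_{q-1}|\underline{\by}_q|^{q-1}$ preserves this shape for $K_{\by}$.

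Inserting this into the definition of $T_{\Omega_D}f$, passing to the slice coordinates $\by\leftrightarrow(\ueta,\by')$ exactly as in the proof of Proposition \ref{estimated}, and interchanging the order of integration, one can pull $\uomega$ outside the integral and obtain $T_{\Omega_D}f(\bx)=\widetilde F_1(\bx_p,r)+\uomega\,\widetilde F_2(\bx_p,r)$, where $\widetilde F_k(\bx_p,r)=-\int_{\Omega_D}\bigl(\sigma_{q-1}|\underline{\by}_q|^{q-1}\bigr)^{-1}G_k^{\by}(\bx_p,r)f(\by)\,d\sigma(\by)$ for $k=1,2$. Each $\widetilde F_k$ inherits from $G_k^{\by}$ its parity in $r$, so $\widetilde F:=\widetilde F_1+i\widetilde F_2$ is a stem function on $D$ and $T_{\Omega_D}f=\mathcal{I}(\widetilde F)\in\mathcal{G}\mathcal{S}(\Omega_D)$, and $\widetilde F$ is continuous wherever the defining integrals converge by dominated convergence. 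The interchange of integrals is legitimate because $|G_k^{\by}(\bx_p,r)|\le\tfrac12\sigma_{q-1}|\underline{\by}_q|^{q-1}\bigl(|K_{\by}(\bx_p+r\ueta)|+|K_{\by}(\bx_p-r\ueta)|\bigr)$ and $\int_{\Omega_D}|K_{\by}(\bx)|\,|f(\by)|\,d\sigma(\by)<\infty$ by Proposition \ref{estimated}(1) when $t>q$.

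For the second step, from $|T_{\Omega_D}f(\bx)|\le|\widetilde F_1(\bx_p,r)|+|\widetilde F_2(\bx_p,r)|$ and $|\ueta|=1$ one dominates each $|\widetilde F_k|$ by the same integral with $|\mathcal{E}_{\by}(\bx)|$ replaced by $\tfrac12\sigma_{p+1}^{-1}\bigl(|\bx_p+r\ueta-\by|^{-(p+1)}+|\bx_p-r\ueta-\by|^{-(p+1)}\bigr)$; crucially, after passing to slice coordinates the Jacobian factor $|\underline{\by}_q|^{q-1}$ cancels the $|\underline{\by}_q|^{-(q-1)}$ carried by $K_{\by}$. Applying Minkowski's integral inequality in the slice direction $\ueta$ (together with Hölder's inequality on $\mathbb{S}^+$) then reduces the bound $\|T_{\Omega_D}f\|_{L^t(\Omega_D)}\le C\|f\|_{L^t(\Omega_D)}$ to the boundedness, uniformly in $\ueta$, of the truncated classical Teodorescu transform $g\mapsto\int_D|(\bx_p-\by_p,\,r\mp\tilde r)|^{-(p+1)}g(\by_p,\tilde r)\,d\by_p\,d\tilde r$ on the slice $\mathbb{R}^{p+2}$ between the weighted spaces $L^t(D,|\tilde r|^{q-1})$ and $L^t(D,|r|^{q-1})$. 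This weighted, two-radius $L^t$-estimate — which has to absorb simultaneously the diagonal singularity of the Cauchy kernel and the degeneracy of the power weights at $r=\tilde r=0$ — is where the real work lies and is the main obstacle; I expect it to be handled by the same kind of computation as in Proposition \ref{estimated}, namely reducing to the one-variable kernel $\int_0^{r_1}l^{(1-s)(p+1)}|r-l|^{(1-s)(q-1)}\,dl$ integrated against the weight $r^{q-1}$ and then applying Hölder together with a power-weight (Stein–Weiss type) fractional-integration inequality. In particular, in the range of Proposition \ref{estimated}(3) the continuity follows at once by combining that estimate with the first step.
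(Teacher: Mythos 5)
Your proposal follows essentially the same route as the paper: you exhibit the stem-function decomposition of the Cauchy kernel $\mathcal{E}_{\by}$, pull $\uomega$ outside the integral to conclude $T_{\Omega_D}f\in\mathcal{G}\mathcal{S}(\Omega_D)$, and then invoke the norm estimate of Proposition \ref{estimated} for the $L^t$-boundedness. You are in fact more careful than the paper on one point: the paper simply cites Proposition \ref{estimated}, whose part (3) only covers $t>\max\{2p-1,2q-1\}$ with $q>1$ rather than the stated range $t>q$, whereas you explicitly flag this mismatch and sketch (without carrying out) the weighted Stein--Weiss type estimate that would be needed to close it.
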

\begin{proof}
	It is easy to know that $T_{\Omega _D}$ maps $L^t\left( \Omega _D \right)$ to $L^t\left( \Omega _D \right)$ can be obtained immediately from Proposition \ref{estimated}. Here, we only prove $T_{\Omega _D}$ also maps $L^t\left( \Omega _D \right)$ to $\mathcal{GS} \left( \Omega _D \right)$. Recall that $\left[ \boldsymbol{y} \right] =\left\{ \boldsymbol{y}_p+r\underline{\omega},\underline{\omega}\in \mathbb{S} \right\}$ and observe that $\mathcal{E} _{\boldsymbol{y}}\left( \boldsymbol{x} \right)$ is left generalized partial-slice monogenic in $\mathbb{R} ^{p+q+1}\backslash \left[ \boldsymbol{y} \right]$. We assume that $\mathcal{E} _{\boldsymbol{y}}\left( \boldsymbol{x} \right) =\mathcal{I} \left( F \right)$, $\boldsymbol{x}^{\prime}=\left( \boldsymbol{x}_p,r \right) \in D$. Set 
	\begin{align*}
		\mathcal{E} _{\boldsymbol{y}}\left( \boldsymbol{x} \right) =F_1\left( \boldsymbol{x}^{\prime} \right) +\underline{\omega }F_2\left( \boldsymbol{x}^{\prime} \right),
	\end{align*}
	where $F=F_1+iF_2$ is the stem function which induces the generalized partial-slice monogenic function $\mathcal{E} _{\boldsymbol{y}}\left( \boldsymbol{x} \right) $ as in the definition. Therefore, we have 
	\begin{align*}
		T_{\Omega _D}f\left( \boldsymbol{x} \right) 
		&=-\int_{\Omega _D}{K _{\boldsymbol{y}}\left( \boldsymbol{x} \right) f\left( \boldsymbol{y} \right) d\sigma \left( \boldsymbol{y} \right)}\\
		&=-\int_{\Omega _D}{\frac{\mathcal{E} _{\boldsymbol{y}}\left( \boldsymbol{x} \right)}{\sigma _{q-1}\vert \underline{\boldsymbol{y}}_q \vert^{q-1}}f\left( \boldsymbol{y} \right) d\sigma \left( \boldsymbol{y} \right)}\\
		&=-\frac{1}{\sigma _{q-1}}\left[ \int_{\Omega _D}{\frac{F_1\left( \boldsymbol{x}^{\prime} \right)}{\vert \underline{\boldsymbol{y}}_q \vert^{q-1}} f\left( \boldsymbol{y} \right) d\sigma \left( \boldsymbol{y} \right)}+\underline{\omega }\int_{\Omega _D}{\frac{F_2\left( \boldsymbol{x}^{\prime} \right)}{\vert \underline{\boldsymbol{y}}_q \vert^{q-1}} f\left( \boldsymbol{y} \right) d\sigma \left( \boldsymbol{y} \right)} \right] .
	\end{align*}
	If we let 
	\begin{align*}
		&H\left( \boldsymbol{x}^{\prime} \right) =H_1\left( \boldsymbol{x}^{\prime} \right) +iH_2\left( \boldsymbol{x}^{\prime} \right)\\ 
		=&\left[-\frac{1}{\sigma _{q-1}} \int_{\Omega _D}{\frac{F_1\left( \boldsymbol{x}^{\prime} \right)}{\vert \underline{\boldsymbol{y}}_q \vert^{q-1}} f\left( \boldsymbol{y} \right) d\sigma \left( \boldsymbol{y} \right)} \right] +i\left[ -\frac{1}{\sigma _{q-1}}\int_{\Omega _D}{\frac{F_2\left( \boldsymbol{x}^{\prime} \right)}{\vert \underline{\boldsymbol{y}}_q \vert^{q-1}} f\left( \boldsymbol{y} \right) d\sigma \left( \boldsymbol{y} \right)} \right] ,   
	\end{align*}
	then since $F_1\left( \boldsymbol{x}^{\prime} \right) $, $F_2\left( \boldsymbol{x}^{\prime} \right)$, $f\left( \boldsymbol{x} \right)$ are all $\mathbb{R} _{p+q}$-valued, we know that $H_1\left( \boldsymbol{x}^{\prime} \right) $, $H_2\left( \boldsymbol{x}^{\prime} \right) $ are both real $\mathbb{R} _{p+q}$-valued. Further, since $F$ is the stem function, we have 
	\begin{align*}
		F_1\left( \boldsymbol{x}_p,-r \right) =F_1\left( \boldsymbol{x}_p,r \right) \,\,and\,\,F_2\left( \boldsymbol{x}_p,-r \right) =-F_2\left( \boldsymbol{x}_p,r \right).
	\end{align*}
	So we get 
	\begin{align*}
		H_1\left( \boldsymbol{x}_p,-r \right) =H_1\left( \boldsymbol{x}_p,r \right) \,\,and\,\,H_2\left( \boldsymbol{x}_p,-r \right) =-H_2\left( \boldsymbol{x}_p,r \right),
	\end{align*}
	and $H\left( \boldsymbol{x}^{\prime} \right)$ is the stem function. Hence, the function $T_{\Omega _D}f$ induced by $H\left( \boldsymbol{x}^{\prime} \right) $ is a generalized partial-slice function, which completes the proof.
\end{proof}

\subsection*{Conclusion}
In this article, we present some integral formulas, such as the Cauchy integral formula for the exterior domain and the Plemelj-Sokhotski formula in the theory of generalized partial-slice monogenic functions. Further, we start an investigation to the Teodorescu transform in this context and a norm estimation for the Teodorescu transform is introduced as well. This leads to a further study to some questions related to the Teodorescu transform. More specifically, for instance, a Hodge decomposition, a generalized $\Pi$ operator and a Vekua system can be investigated in the framework of generalized partial-slice monogenic functions. 
%%%%%%%%%%%%%       Hodge decomposition a Banach space       %%%%%%%%%%%%%

\subsection*{Acknowledgments}
This article is dedicated to the memory of Professor Yuri Grigoriev. The work of Chao Ding is supported by National Natural Science Foundation of China (No. 12271001), Natural Science Foundation of Anhui Province (No. 2308085MA03) and Excellent University Research and Innovation Team in Anhui Province (No. 2024AH010002).

% ------------------------------------------------------------------------

\subsection*{Data Availability}
No new data were created or analysed during this study. Data sharing is not applicable to this article.

%Paper:
%Balli, S., Chand, S.: Transmission angle in mechanisms. Mech. Mach. Theory 37(2), 175–195 (2002)
%Book:
%Bayro-Corrochano, E.: Geometric Computing: for Wavelet Transforms, Robot Vision, Learning, Control and Action. Springer Publishing Company Inc., Berlin (2010)

%\bibliographystyle{spmpsci}
%\bibliography{myBibLib} 

% ------------------------------------------------------------------------
\end{document}